\setlist[enumerate]{leftmargin=.5in} % move lists a bit further to the right
\setlist[itemize]{leftmargin=.5in} % move lists a bit further to the right
\providecommand{\keywords}[1]{\textbf{Keywords: } #1} %\keywords command
\def\natural{\mathbb{N}}
\def\real{\mathbb{R}}
\let\originalleft\left
\renewcommand{\left}{\mathopen{}\mathclose\bgroup\originalleft} % removes extra spacing for \left command
\let\originalright\right\renewcommand{\right}{\aftergroup\egroup\originalright} % removes extra spacing for \right command
\newtheorem*{theorem*}{Theorem} % theorem without numbering
\newtheorem{theorem}{Theorem}[section] % e.g. [section] at the end lets the numbering restart every section
\newtheorem{lemma}[theorem]{Lemma} % e.g. [theorem] in the middle treats it as a theorem, when numbering
\newtheorem{proposition}[theorem]{Proposition}
\newtheorem{definition}[theorem]{Definition}
\newtheorem{remark}[theorem]{Remark}
\newtheorem{corollary}[theorem]{Corollary}
\newtheorem*{conjecture*}{Conjecture} % theorem without numbering
\newtheorem{conjecture}[theorem]{Conjecture}
\Crefname{conjecture}{Conjecture}{Conjectures}
\newsavebox{\measure@tikzpicture}
	\def\tikz@width{#1}%
\begin{document}
	
	\title{Well-separating common complements of a sequence of subspaces of the same codimension in a Hilbert space are generic}
\author{Florian Noethen
	\thanks{Fachbereich Mathematik, Universität Hamburg, Bundesstraße 55, 20146 Hamburg, Germany (\href{mailto:florian.noethen@uni-hamburg.de}{florian.noethen@uni-hamburg.de}).}
}
\date{\today} % leave empty for no date
\maketitle

\begin{abstract}
	Given a family of subspaces we investigate existence, quantity and quality of common complements in Hilbert spaces and Banach spaces. In particular we are interested in complements for countable families of closed subspaces of finite codimension. Those families naturally appear in the context of exponential type splittings like the multiplicative ergodic theorem, which recently has been proved in various infinite-dimensional settings. In view of these splittings, we show that common complements with subexponential decay of quality are generic in Hilbert spaces. Moreover, we prove that the existence of one such complement in a Banach space already implies that they are generic.
\end{abstract}

\keywords{common complements; degree of transversality; well-separating}

\hspace{1em}

%\begin{center}
%	Declarations of interest: none
%\end{center}

\tableofcontents

\hypersetup{linkcolor=red}
	
	\section{Introduction}\label{sectionIntroduction}
	
	Every proper subspace $V$ of a real vector space $X$ has a complementary subspace $C$, i.e. $X=V\oplus C$. Given two subspaces $V_1,V_2\subset X$ the existence of a common complement $C$ for both subspaces simultaneously becomes a more involved question \cite{drivaliaris2008subspaces,Drivaliaris2008,Lauzon2004}. A necessary requirement for the existence of a common complement is that $V_1$ and $V_2$ have the same codimension. In finite dimensions this requirement is enough to ensure the existence of a common complement. More generally, in \cite{Todd1990} it is shown that any countable family $(V_j)_{j\in\natural}$ of proper subspaces of the same codimension has uncountably many common complements if $\dim X<\infty$.\par
	Here, we are concerned with the infinite-dimensional setting, where $X$ is a Hilbert space or more generally a Banach space. In particular, we are seeking common complements for the class of closed subspaces of finite codimension. After briefly discussing the case of common complements for finitely many hyperplanes in \cref{sectionFiniteHyperplanes}, we will focus on common complements for countable families of subspaces in \cref{sectionWellSeparatingCommonComplements}. Central questions are the existence and quantity of common complements.\par 
	Just as important as the previous aspects is the quality of a complement. We will introduce a degree of transversality that indicates how close a complement is to stop being a complement or to being an optimal complement, which is the orthogonal complement in the Hilbert space setting. For a fixed complement $C$ the degree of transversality of each pairing $(V_j,C)$ should be as high as possible. While in general we cannot find a common lower bound on the degree of transversality, as the pairings may become worse with increasing index $j$, we will concentrate on the rate at which the quality may decrease. In view of exponential type splittings like the multiplicative ergodic theorem \cite{Gary2013SemiInvertibleMET,gonzalez-tokman_quas_2014,gonzalez2014concise}, we require that the quality decays at most subexponentially. Common complements fulfilling this requirement are called \emph{well-separating}. They open up new applications such as \cite{noethen2019METHilbert}.\par
	Using the concept of prevalence, we will prove the following theorem.
	\begin{theorem*}
		Well-separating common complements of a sequence of closed subspaces of the same codimension in a Hilbert space are generic.
	\end{theorem*}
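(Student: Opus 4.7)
The plan is to work in a local chart of the Grassmannian of $d$-dimensional subspaces of $H$ (where $d$ is the common codimension of the $V_j$), reduce the statement to a prevalence claim on a Banach space, and then estimate the measure of the bad set of low-quality complements relative to a carefully chosen finite-dimensional probe.

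First I would fix a $d$-dimensional reference subspace $U_0 \subset H$ with $U_0 \cap V_j = \{0\}$ for every $j$, and parametrize nearby $d$-dimensional subspaces as graphs $C_T = \{u + Tu : u \in U_0\}$ with $T$ in the Banach space $\mathcal{B} := B(U_0, U_0^\perp)$. Choosing bases $(e_k)_{k=1}^d$ of $U_0$ and $(g_i^{(j)})_{i=1}^d$ of $V_j^\perp$, the condition that $C_T$ is a complement of $V_j$ is equivalent to the invertibility of the $d \times d$ matrix $A_j(T) := \bigl(\langle e_k + T e_k,\, g_i^{(j)}\rangle\bigr)_{i,k}$, which depends affinely on $T$. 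The degree of transversality $\sigma(V_j, C_T)$ can be bounded below in terms of the smallest singular value of $A_j(T)$, and hence by $|\det A_j(T)|/\|A_j(T)\|^{d-1}$ up to constants, uniformly over bounded subsets of $\mathcal{B}$.

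For each fixed $\epsilon > 0$, I would then use a finite-dimensional probe to bound the bad set $B_{j,\epsilon} := \{T \in \mathcal{B} : \sigma(V_j, C_T) < e^{-\epsilon j}\}$. Concretely, I would choose a $d^2$-dimensional subspace $W \subset \mathcal{B}$ spanned by rank-one operators of the form $u \mapsto \langle u, e_k\rangle\, h_i$ for a suitable orthonormal family $h_1,\ldots,h_d \in U_0^\perp$, and take $\mu$ to be normalized Lebesgue measure on a ball in $W$. On any translate $v + W$, the map $T \mapsto \det A_j(T)$ is a real polynomial of degree $\leq d$ in the $d^2$ probe coordinates, and a polynomial Remez-type inequality yields $\mu\bigl((B_{j,\epsilon} - v) \cap W\bigr) \leq C_j\, e^{-\epsilon j/d}$, where $C_j$ is controlled by the inverse of the supremum of $|\det A_j|$ over the probe ball. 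Provided that $\sum_j C_j\, e^{-\epsilon j/d} < \infty$, a Borel--Cantelli argument yields that $\mu$-almost every $T$ satisfies $\sigma(V_j, C_T) \geq e^{-\epsilon j}$ for all but finitely many $j$; intersecting over $\epsilon = 1/k$ and using stability of prevalence under countable intersections shows that well-separating complements form a prevalent subset of $\mathcal{B}$, and hence are generic.

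The main obstacle is ensuring that $C_j$ grows at most subexponentially in $j$, which reduces to a subexponential lower bound on $\sup_{T \in \text{ball}} |\det A_j(T)|$. This can be guaranteed provided the center of the probe has a subexponentially decaying value of $\sigma(V_j, U_0)$, i.e.\ provided $U_0$ is itself a well-separating common complement. Thus the existence of one such complement, presumably established in the preceding sections, bootstraps into genericity via the prevalence argument. The careful choice of the probe directions $h_i$, and the verification that the leading coefficient of $\det A_j$ in the probe coordinates inherits the subexponential decay uniformly in $j$, are the most delicate technical points of the proof.
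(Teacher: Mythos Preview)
Your proposal correctly identifies the two-stage structure of the paper's argument: first establish the existence of a single well-separating common complement, then use it as a probe to upgrade existence to prevalence. The paper does exactly this bootstrap in \cref{propositionPrevalence}, with the existence step supplied separately by \cref{theoremMainHilbertSpaceExistence} (via the box-intersection \cref{lemmaIntersectionBox,lemmaExistenceHilbertSpace} for $k=1$ and an induction on the codimension using \cref{lemmaLine}).

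Where you diverge is in the ambient space, the probe, and the determinant estimate. The paper does not work in a Grassmannian chart $B(U_0,U_0^\perp)$; it parametrizes candidate complements by tuples in $H^k$ and takes the probe to be $C^k$, where $C$ is the already-found well-separating complement. For a translation $(x_1,\dots,x_k)\in H^k$, each $x_i$ is decomposed as $c'_{ij}+v'_{ij}$ along $C\oplus V_j$; the $C$-parts assemble into matrices $A_j\in\real^{k\times k}$ with $\|A_j\|\lesssim\delta_j^{-1}$, and the key \cref{lemmaDeterminantLowerBoundGeneric} shows that for \emph{any} sequence $(A_j)$, almost every $A\in\real^{k\times k}$ satisfies $|\det(A+A_j)|\geq\epsilon j^{-2}$. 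This is proved by a direct Fubini/volume bound $\mu\{|\det(\,\cdot\,+\tilde A)|\leq\eta\}\leq c\eta$, uniform in $\tilde A$, followed by a Borel--Cantelli sum; it is translation-invariant by design, which is precisely what prevalence demands, and it feeds into \cref{lemmaInverseUpperBoundGeneric} to control $\|(A+A_j)^{-1}\|$.

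Your Remez-based bound, by contrast, controls the measure of a sublevel set in terms of the supremum of $|\det A_j|$ over the \emph{translated} probe ball, hence in terms of the leading coefficient of $\det A_j$ in the probe coordinates. As you note, this leading coefficient is $\det\bigl(\langle h_l,g_i^{(j)}\rangle\bigr)$, and asking it to decay only subexponentially amounts to requiring $\textnormal{span}(h_1,\dots,h_d)\subset U_0^\perp$ to be a \emph{second} well-separating complement of the $V_j$, orthogonal to the first one $U_0$. That is an additional construction you have not carried out. The paper sidesteps this entirely: by probing with $C^k$ itself rather than with operators into $U_0^\perp$, the leading coefficient is identically $1$ and the whole burden is carried by the uniform sublevel estimate of \cref{lemmaDeterminantLowerBoundGeneric}. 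Your route is likely salvageable in a Hilbert space, but it is less direct, needs an extra existence input, and does not yield the explicit polynomial decay rates the paper records in \cref{remarkPrevalenceDegree}.
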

	Since many techniques of the proof apply to Banach spaces, we will show that the existence of one well-separating common complement in a Banach space already implies that they are generic.

	\bigskip
	
	\section{Common complements for finitely many hyperplanes}\label{sectionFiniteHyperplanes}
	Before looking at countable families, this section deals with finite numbers of subspaces. Moreover, we restrict ourselves to the codimension $1$ case. Using simple geometric tools, we find common complements when $X=\real^n$ or when $X$ is an arbitrary Banach space. The quality of those complements motivates why we will require subexponential decay of the degree of transversality for families of subspaces in \cref{sectionWellSeparatingCommonComplements}.

	\begin{definition}\label{definitionGrassmannians}
		Let $X$ be a Banach space. The \emph{Grassmannian} $\mathcal{G}(X)$ is the set of closed complemented subspaces of $X$. It contains $\mathcal{G}_k(X)$, the set of $k$-dimensional subspaces, and $\mathcal{G}^k(X)$, the set of closed subspaces of codimension $k$. Elements of $\mathcal{G}^1(X)$ are called \emph{hyperplanes}.\par
		Moreover, we regard
		\begin{equation}\label{equationTransversality}
			\underset{x\in C,\ \|x\|=1}{\inf}\,\textnormal{d}(x,V)
		\end{equation}
		as the \emph{degree of transversality} between $V\in\mathcal{G}^k(X)$ and $C\in\mathcal{G}_k(X)$.
	\end{definition}

	Indeed, \cref{equationTransversality} takes values between zero and one. It is equal to zero if and only if $C$ is not a complement of $V$. If $X$ is a Hilbert space, then \cref{equationTransversality} equals one if and only if $C=V^{\perp}$.\\
	
	The next result gives us a geometric tool for finding common complements in $\real^n$.
	
	\begin{lemma}\label[lemma]{lemmaIntersectionPolytope}
		Let $P\subset\real^n$ be a compact, convex $n$-polytope with faces $(F_i)_{i=1}^m$ and normals $(f_i)_{i=1}^m$. Moreover, let $V\subset\real^n$ be a hyperplane with normal $v$. Then, the volume of the orthogonal projection of $P$ onto $V$ satisfies
		\begin{equation*}
		\textnormal{vol}_{n-1}(\Pi_V P)=\frac{1}{2}\sum_{i=1}^{m}\textnormal{vol}_{n-1}(F_i)|\langle f_i,v\rangle|.
		\end{equation*}
	\end{lemma}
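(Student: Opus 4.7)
The plan is to use the classical \enquote{count each point of the shadow twice} argument: for almost every $y \in \Pi_V P$ the line $y + \mathbb{R} v$ meets $\partial P = \bigcup_{i=1}^m F_i$ in exactly two points (one where the line enters $P$, one where it exits). Hence, outside a set of $(n-1)$-measure zero in $V$, the fibers of $\Pi_V|_{\partial P} : \partial P \to V$ have cardinality two, which yields
\begin{equation*}
2\,\textnormal{vol}_{n-1}(\Pi_V P) \;=\; \sum_{i=1}^{m} \textnormal{vol}_{n-1}(\Pi_V F_i).
\end{equation*}
The bad set consists of those $y$ whose line hits the relative boundary of some face or is tangent to the polytope; since $P$ is convex with finitely many faces, this is contained in a finite union of projections of $(n-2)$-dimensional facets and is therefore negligible.

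The next step is to compute $\textnormal{vol}_{n-1}(\Pi_V F_i)$ for each face. Since $F_i$ lies in the hyperplane with unit normal $f_i$, I would parametrize $F_i$ by an orthonormal basis of that hyperplane and compute the Jacobian of the orthogonal projection onto $V$. A direct calculation (Gram determinant, or the standard fact that the projection of a parallelepiped in a hyperplane with normal $f_i$ onto a hyperplane with normal $v$ multiplies $(n-1)$-volume by $|\langle f_i,v\rangle|$) gives
\begin{equation*}
\textnormal{vol}_{n-1}(\Pi_V F_i) \;=\; |\langle f_i,v\rangle|\,\textnormal{vol}_{n-1}(F_i).
\end{equation*}
Equivalently, one can first project along $V^{\perp}$ onto $V$ and observe that the composition $F_i \to V$ factors through the orthogonal projection of the hyperplane containing $F_i$ onto $V$, whose Jacobian is the cosine of the angle between the two hyperplanes, i.e.\ $|\langle f_i,v\rangle|$.

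Combining these two equalities yields the formula. The main obstacle is the measure-zero bookkeeping in the first step: one must confirm that for almost every $y \in \Pi_V P$ the line $y+\mathbb{R}v$ meets the interior of exactly two faces (hence two points of $\partial P$), so that the shadow is covered twice in total. Convexity plus the finiteness of the face set handles this cleanly, because the \enquote{ambiguous} $y$'s lie in the union $\bigcup_i \Pi_V(\partial_{\text{rel}} F_i)$ of projections of lower-dimensional facets, which has $(n-1)$-measure zero in $V$. Everything else reduces to a routine Jacobian computation.
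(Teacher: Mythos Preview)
Your proposal is correct and follows essentially the same approach as the paper: the paper's proof is a brief sketch noting that $\Pi_V P = \Pi_V \partial P$, that the interior of the shadow is covered twice by the projection of $\partial P$, and that $\textnormal{vol}_{n-1}(\Pi_V F_i) = \textnormal{vol}_{n-1}(F_i)\,|\langle f_i,v\rangle|$ for each face, citing a reference for details. You have simply supplied those details---the two-point fiber argument for the factor $\tfrac{1}{2}$, the measure-zero bookkeeping for degenerate lines, and the Jacobian computation for each face---so there is nothing substantively different between the two.
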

	\begin{proof}
		This is a known result, see for example \cite{Burger1996}. The basic ideas are that $\Pi_V P=\Pi_V \partial P$ and that the interior of $\Pi_V \partial P$ is covered twice by the projection of the hull $\partial P$. Now, one only needs to check that $\textnormal{vol}_{n-1}(\Pi_V F_i)=\textnormal{vol}_{n-1}(F_i)|\langle f_i,v\rangle|$ for each face.
	\end{proof}
	
	\begin{corollary}\label[corollary]{corollaryIntersectionCube}
		Let $(V_j)_{j=1}^k$ be hyperplanes in $\real^n$. There exists $x\in\real^n$ with $\|x\|_2=1$ s.t. $\textnormal{d}(x,V_j)\geq \frac{1}{2}k^{-1}n^{-1}$.
	\end{corollary}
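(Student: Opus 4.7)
The plan is to apply \cref{lemmaIntersectionPolytope} with the cube $P := [-1,1]^n$ and then use a measure-theoretic pigeonhole argument. Let $v_j$ denote a unit normal to $V_j$, so that $\textnormal{d}(x, V_j) = |\langle x, v_j\rangle|$; it suffices to produce a unit vector $x$ with $|\langle x, v_j\rangle| \geq \frac{1}{2kn}$ for each $j \in \{1, \dots, k\}$. The $2n$ faces of $P$ have $(n-1)$-volume $2^{n-1}$ and unit normals $\pm e_i$, so \cref{lemmaIntersectionPolytope} yields $\textnormal{vol}_{n-1}(\Pi_{V_j} P) = 2^{n-1}\|v_j\|_1 \leq 2^{n-1}\sqrt{n}$, the last step being Cauchy--Schwarz applied to the unit vector $v_j$.

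Next, I would estimate the bad slab $S_j(\delta) := \{x \in P : |\langle x, v_j\rangle| < \delta\}$ for $\delta > 0$. Slicing $P$ by the affine hyperplanes $\{\langle x, v_j\rangle = t\}$ and applying Fubini gives $\textnormal{vol}(S_j(\delta)) \leq 2\delta \cdot \textnormal{vol}_{n-1}(\Pi_{V_j} P) \leq 2^n \delta\sqrt{n}$, since each such slice projects isometrically onto a subset of the shadow $\Pi_{V_j} P$. A union bound then yields $\textnormal{vol}\bigl(\bigcup_{j=1}^k S_j(\delta)\bigr) \leq k \cdot 2^n \delta\sqrt{n}$, which is strictly less than $\textnormal{vol}(P) = 2^n$ as soon as $\delta < 1/(k\sqrt{n})$. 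Passing to the limit $\delta \uparrow 1/(k\sqrt{n})$ via compactness of $P$, one obtains $y \in P$ with $|\langle y, v_j\rangle| \geq 1/(k\sqrt{n})$ for every $j$.

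Finally, the inclusion $y \in [-1,1]^n$ forces $\|y\|_2 \leq \sqrt{n}$, so the unit vector $x := y/\|y\|_2$ (well defined because $\langle y, v_j\rangle \neq 0$ implies $y \neq 0$) satisfies $|\langle x, v_j\rangle| \geq 1/(kn) \geq 1/(2kn)$, as required. The main subtlety is the Fubini estimate on $\textnormal{vol}(S_j(\delta))$: one has to verify that every cross-section of $P$ perpendicular to $v_j$ projects as an isometry onto a subset of the full shadow $\Pi_{V_j} P$, so that its $(n-1)$-volume is dominated by $\textnormal{vol}_{n-1}(\Pi_{V_j} P)$ uniformly in the slicing parameter. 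Beyond this step, only Cauchy--Schwarz and the union bound are needed, and the factor of two in the target bound provides comfortable slack.
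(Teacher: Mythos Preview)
Your proof is correct and follows essentially the same route as the paper: apply \cref{lemmaIntersectionPolytope} to the cube $[-1,1]^n$ to bound the shadow volumes by $2^{n-1}\sqrt{n}$, use a Fubini/union-bound argument to find a good point $y$ in the cube, and then normalize. The only cosmetic difference is that the paper fixes $\delta=\tfrac{1}{2}k^{-1}n^{-1/2}$ from the start (so the bad set has measure at most $2^{n-1}<2^n$), whereas you take $\delta\uparrow 1/(k\sqrt{n})$ via compactness; this extra step is valid but unnecessary for the stated bound.
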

	\begin{proof}
		Let $P=[-1,1]^n$ and let $v_j$ be the normal of $V_j$. We have
		\begin{equation*}
		\textnormal{vol}_{n-1}(\Pi_{V_j} P)=2^{n-1}\sum_{i=1}^{n}|\langle e_i,v_j\rangle|=2^{n-1}\|v_j\|_1\leq 2^{n-1}\sqrt{n}.
		\end{equation*}
		Now, let $\delta:=\frac{1}{2}k^{-1}n^{-\frac{1}{2}}$. Denote by $\mu$ the Lebesgue measure on $\real^n$. We estimate
		\begin{align*}
		\mu\left(\left\{y\in P\ |\ \exists j:\ |\langle y,v_j\rangle|\leq\delta \right\}\right)&\leq\sum_{j=1}^{k}\mu\left(\left\{y\in P\ :\ |\langle y,v_j\rangle|\leq\delta \right\}\right)\\
		&\leq 2\delta\sum_{j=1}^{k}\textnormal{vol}_{n-1}(\Pi_{V_j} P)\\
		&\leq 2\delta k 2^{n-1}\sqrt{n}\\
		&= 2^{n-1}.
		\end{align*}
		Since $\textnormal{vol}(P)=2^n$, there must be an element $y\in P$ with $|\langle y,v_j\rangle|\geq\delta$ for all $j$. Writing $x:=y/\|y\|_2$ yields $\textnormal{d}(x,V_j)=|\langle x,v_j\rangle|\geq\delta/\|y\|_2\geq \frac{1}{2}k^{-1}n^{-1}$.
	\end{proof}
	
	\begin{remark}\label[remark]{remarkHyperplanesLowerBound}
		A lower bound better than $\frac{1}{2}k^{-1}n^{-1}$ for arbitrary hyperplanes is possible by looking at intersections of the unit ball and hyperplanes. In the case $V_j=\{x_j=0\}$ the best possible lower bound is $n^{-\frac{1}{2}}$.
	\end{remark}
	
	The next theorem is a well known result in the context of the Banach-Mazur compactum. As a consequence of John's theorem \cite{john2014} about ellipsoids, the maximal (multiplicative) distance of any Banach space of dimension $n$ to the standard euclidean space $\real^n$ is at most $\sqrt{n}$.
	
	\begin{theorem}\label[theorem]{theoremBanachMazur}
		Let $X$ be a Banach space of dimension $n$. There exists an isomorphism $T:(X,\|.\|)\to(\real^n,\|.\|_2)$ s.t. $\|T\|\|T^{-1}\|\leq\sqrt{n}$. 
	\end{theorem}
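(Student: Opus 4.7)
The plan is to prove this via John's theorem (already cited in the excerpt), which asserts that every symmetric convex body $K\subset\real^n$ contains a unique ellipsoid $E$ of maximal volume, and that $E\subseteq K\subseteq\sqrt{n}\,E$. Once John's theorem is available, the rest is essentially a change of coordinates.

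First I would fix any linear isomorphism $S:X\to\real^n$ and transport the unit ball $B_X:=\{x\in X:\|x\|\leq1\}$ to $K:=S(B_X)\subset\real^n$. Since $\|\cdot\|$ is a norm, $K$ is a symmetric, compact, convex body with nonempty interior, so John's theorem applies and yields an ellipsoid $E$ with
\begin{equation*}
E\subseteq K\subseteq\sqrt{n}\,E.
\end{equation*}

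Next I would write $E=A(B_2)$ for some invertible linear map $A:\real^n\to\real^n$, where $B_2$ denotes the closed Euclidean unit ball. Setting $T:=A^{-1}S$, the inclusion above becomes
\begin{equation*}
B_2\subseteq T(B_X)\subseteq\sqrt{n}\,B_2.
\end{equation*}
The right-hand inclusion says that $\|Tx\|_2\leq\sqrt{n}$ whenever $\|x\|\leq1$, i.e.\ $\|T\|\leq\sqrt{n}$. The left-hand inclusion says that every $y\in B_2$ is of the form $Tx$ for some $x\in B_X$, i.e.\ $\|T^{-1}y\|\leq 1$ for $\|y\|_2\leq1$, so $\|T^{-1}\|\leq1$. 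Multiplying gives $\|T\|\|T^{-1}\|\leq\sqrt{n}$.

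The only genuine obstacle is John's theorem itself, which I would invoke as a black box since it is cited. Beyond that, the argument is a short bookkeeping exercise: the key observation is that for an operator between normed spaces the product $\|T\|\|T^{-1}\|$ only depends on the shape of $T(B_X)$ relative to $B_2$ — precisely the ratio controlled by John's inclusion.
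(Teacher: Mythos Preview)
Your proposal is correct and matches the paper's approach: the paper does not give a proof but simply invokes John's theorem \cite{john2014} as the source of the bound, which is exactly the black box you rely on. Your bookkeeping with the map $T=A^{-1}S$ and the inclusions $B_2\subseteq T(B_X)\subseteq\sqrt{n}\,B_2$ is the standard derivation and nothing more is needed.
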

	
	\begin{corollary}\label[corollary]{corollaryBanachMazur}
		Let $(V_j)_{j=1}^k$ be hyperplanes of a Banach space $X$. There exists $x\in X$ of norm $1$ s.t. $\textnormal{d}(x,V_j)\geq \frac{1}{4}k^{-\frac{5}{2}}$.
	\end{corollary}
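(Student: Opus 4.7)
The plan is to reduce the statement to its finite-dimensional Euclidean incarnation (\cref{corollaryIntersectionCube}) by a two-step compression: first cut out a well-chosen finite-dimensional subspace $Y\subset X$, then use \cref{theoremBanachMazur} to transport the problem to $\real^n$. The only thing to watch along the way is that enough geometric information survives each compression, most importantly that the $V_j$ remain honest hyperplanes in $Y$ with controlled functional norms.

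First step. Each closed hyperplane $V_j$ is the kernel of a unit-norm functional $\phi_j \in X^*$, for which the standard formula $\textnormal{d}(x,V_j)=|\phi_j(x)|$ holds. For each $j$ I would pick a near-norming unit vector $y_j\in X$ with $|\phi_j(y_j)|\geq 1/2$, and set $Y$ to be the linear span of $y_1,\ldots,y_k$, which has dimension $n\leq k$. Then the restrictions $\psi_j := \phi_j|_Y$ satisfy $\|\psi_j\|\geq 1/2$, so each $V_j\cap Y = \ker \psi_j$ is a genuine hyperplane in $Y$.

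Second step. Invoke \cref{theoremBanachMazur} to obtain an isomorphism $T\colon Y \to \real^n$, rescaled so that $\|T\|=1$ and $\|T^{-1}\|\leq \sqrt n$. Apply \cref{corollaryIntersectionCube} to the hyperplanes $T(V_j\cap Y) \subset \real^n$, producing $z\in\real^n$ with $\|z\|_2=1$ and $\textnormal{d}(z, T(V_j\cap Y))\geq \tfrac{1}{2}k^{-1}n^{-1}$. Setting $y := T^{-1}(z)$, the bounds on $T$ yield $\|y\|\leq \sqrt n$ and $\textnormal{d}(y,V_j\cap Y)\geq \tfrac{1}{2}k^{-1}n^{-1}$. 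Combining the identity $\textnormal{d}(y,V_j)=|\phi_j(y)|=\|\psi_j\|\cdot \textnormal{d}(y,V_j\cap Y)$ with $\|\psi_j\|\geq 1/2$, and then passing to the unit vector $x:=y/\|y\|$, one obtains $\textnormal{d}(x,V_j)\geq \tfrac{1}{4}k^{-1}n^{-3/2}\geq \tfrac{1}{4}k^{-5/2}$, using $n\leq k$.

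The delicate point in this plan is the first compression: if $Y$ is chosen carelessly, some $\phi_j|_Y$ can collapse and the distance identity becomes useless. The near-norming choice of the $y_j$ is precisely what prevents this, keeping each restricted functional norm bounded below by $1/2$. The rest is routine bookkeeping of constants: the $k^{-1}n^{-1}$ factor from \cref{corollaryIntersectionCube}, the extra $\sqrt n$ paid for reverting the Banach--Mazur scaling, and the estimate $n\leq k$ combine to give the exponent $-5/2$.
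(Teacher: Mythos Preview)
Your argument is correct, but it takes the dual route to the paper's proof. The paper compresses via the \emph{quotient} $Y:=X/(V_1\cap\dots\cap V_k)$: since $V:=\bigcap_j V_j$ sits inside every $V_j$, the images $\pi V_j$ are automatically hyperplanes in $Y$ and the identity $\textnormal{d}(x',V_j)=\textnormal{d}(\pi x',\pi V_j)$ is free; the price is paid at the end, where one must lift a quotient-class $y=\pi x'$ back to an honest vector of norm at most $2$, costing the factor $\tfrac12$. You compress via a \emph{subspace} $Y:=\textnormal{span}(y_1,\dots,y_k)$ spanned by near-norming vectors: here the lift is trivial (your $y$ already lives in $X$), but you must work to keep the hyperplanes from degenerating, and the bound $\|\psi_j\|\geq\tfrac12$ is exactly where your factor $\tfrac12$ enters. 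Both reductions land on the same $n$-dimensional problem, invoke \cref{theoremBanachMazur} with the opposite normalization of $T$, and arrive at the identical constant $\tfrac14 k^{-5/2}$. The quotient approach is slightly slicker in that no auxiliary choice of near-norming points is needed; your subspace approach has the advantage that the final vector is constructed directly in $X$ without a lifting step.
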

	\begin{proof}
		Set $V:=V_1\cap\dots\cap V_k$ and $Y:=X/V$. As a quotient space, $Y$ is a Banach space of dimension $n\leq k$. The quotient map $\pi:X\to Y$ sends $(V_j)_{j=1}^k$ to hyperplanes of $Y$. Now, by \cref{theoremBanachMazur} there is an isomorphism $T$ mapping $Y$ to $(\real^n,\|.\|_2)$ s.t. $\|T\|\leq\sqrt{n}$ and $\|T^{-1}\|\leq 1$. By \cref{corollaryIntersectionCube} we find $z\in\real^n$ with $\|z\|_2=1$ and $\textnormal{d}(z,T\pi V_j)\geq \frac{1}{2}k^{-1}n^{-1}$ for all $j$. Let $y:=T^{-1}z$. It holds $\|y\|_Y\leq 1$ and
		\begin{equation*}
		\frac{1}{2}k^{-1}n^{-1}\leq\underset{v_j\in V_j}{\inf}\|z-T\pi v_j\|_2\leq\underset{v_j\in V_j}{\inf}\|T\|\|y-\pi v_j\|_Y\leq\sqrt{n}\textnormal{d}(y,\pi V_j).
		\end{equation*}
		Take $x'\in X$ with $\pi x'=y$. Since $\inf_{v\in V}\|x'-v\|=\|y\|_Y\leq 1$, we find $v'\in V$ with $\|x'-v'\|\leq 2$. Set $x:=(x'-v')/\|x'-v'\|$. One readily checks that $\textnormal{d}(y,\pi V_j)=\textnormal{d}(x',V_j)=\|x'-v'\|\textnormal{d}(x,V_j)$. The claim follows.
	\end{proof}

	\begin{remark}\label[remark]{remarkExistenceArbitraryCodimFiniteNumber}
		From the proof of \cref{theoremMainHilbertSpaceExistence} for arbitrary codimensions together with \cref{corollaryBanachMazur} one can derive the existence of common complements for any finite number of subspaces $(V_j)_{j=1}^k\subset\mathcal{G}^k(X)$ of a Banach space.
	\end{remark}
	
	Given $k$ hyperplanes in a Banach space, \cref{corollaryBanachMazur} implies that there exists a common complement such that the minimal degree of transversality is bounded from below by $\frac{1}{4}k^{-\frac{5}{2}}$. On the other hand, \cref{remarkHyperplanesLowerBound} suggests that there are cases where $k^{-\frac{1}{2}}$ is the best that can be archived. Hence, as the number of hyperplanes is increased to infinity, we cannot hope for a common complement with positive minimal degree of transversality in general. Instead, we will ask for complements such that the degree of transversality decays at most subexponentially with the index of the hyperplane.

	\bigskip
	
	\section{Well-separating common complements}\label{sectionWellSeparatingCommonComplements}
	
	We briefly introduce our concept of well-separating common complements for families of subspaces. The existence of those complements is treated in \cref{subsectionExistence}. It will turn out that the existence of well-separating common complements for hyperplanes already implies the existence of well-separating common complements for subspaces of arbitrary codimension. In particular, they always exist if $X$ is a Hilbert space. Finally, \cref{subsectionPrevalence} will explain why the existence of one well-separating common complement in a Banach space $X$ is enough for them to be generic.
	
	\begin{definition}\label[definition]{definitionWellSeparatingComplement}
		Let $(X,\|.\|)$ be a Banach space, let $(V_j)_{j\in\natural}\subset\mathcal{G}^k(X)$, and let $\delta=(\delta_j)_{j\in\natural}\subset\real_{>0}$. A common complement $C\in\mathcal{G}_k(X)$ of $(V_j)_{j\in\natural}$ is called \emph{$\delta$-separating} if
		\begin{equation*}
		\forall j\in\natural\ :\hspace{1em}\underset{x\in C,\ \|x\|=1}{\inf}\textnormal{d}(x,V_j)\geq\delta_j.
		\end{equation*}
		Moreover, $C$ is called \emph{well-separating} if $C$ is $\delta$-separating for some $\delta$ with
		\begin{equation*}
		\lim_{j\to\infty}\frac{1}{j}\log \delta_j=0.
		\end{equation*}
	\end{definition}
	
	\begin{remark}\label[remark]{remarkWellSeparatingRational}
		A complement is well-separating if we can find $\delta$ with subexponential decay as $j\to\infty$. In particular, this holds true for polynomially decaying $\delta$.
	\end{remark}

\subsection{Existence}\label{subsectionExistence}

	\begin{theorem}\label[theorem]{theoremMainHilbertSpaceExistence}
		Let $H$ be a Hilbert space and let $(V_j)_{j\in\natural}\subset\mathcal{G}^k(H)$. There exists a well-separating common complement $C\in\mathcal{G}_k(H)$ of $(V_j)_{j\in\natural}$.
	\end{theorem}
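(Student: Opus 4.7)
The plan is to establish the theorem by first reducing to a separable Hilbert space, then recasting the codimension-$k$ problem as a matrix-conditioning problem, and finally constructing the desired frame via finite-dimensional approximation combined with iterative perturbation.

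For the first reduction, note that $H' := \overline{\textnormal{span}}\bigcup_{j\in\natural}V_j^\perp$ is a separable closed subspace, and $(H')^\perp\subset\bigcap_j V_j$. Any common $k$-dimensional complement of $(V_j\cap H')_j$ in $H'$ is automatically a common complement of $(V_j)_j$ in $H$ with the same degrees of transversality, so I may assume $H$ is separable. For the second reduction, I would fix an orthonormal basis $v_{j,1},\dots,v_{j,k}$ of $V_j^\perp$ for each $j$. If $(x_a)_{a=1}^{k}$ is an orthonormal tuple spanning $C$, then a direct computation using the orthonormality of $(v_{j,b})_b$ gives
\begin{equation*}
\inf_{x\in C,\,\|x\|=1}\textnormal{d}(x,V_j)=\sigma_{\min}(M_j),\quad M_j:=\left(\langle x_a,v_{j,b}\rangle\right)_{a,b=1}^{k},
\end{equation*}
where $\sigma_{\min}$ denotes the smallest singular value. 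It therefore suffices to construct an orthonormal $k$-frame $(x_a)\subset H$ with $\sigma_{\min}(M_j)\geq\delta_j$ for some $(\delta_j)$ satisfying $\lim_{j\to\infty}(1/j)\log\delta_j=0$.

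For the construction, I would use the nested finite-dimensional subspaces $E_n:=\textnormal{span}\{v_{j,b}:j\leq n,\,b\leq k\}$, each of dimension at most $nk$. Inside $E_n$, the combination of \cref{corollaryIntersectionCube} with a Banach--Mazur rescaling (compare \cref{remarkExistenceArbitraryCodimFiniteNumber}) yields an orthonormal $k$-frame with a polynomial lower bound on $\sigma_{\min}(M_j)$ for every $j\leq n$. I would then iterate: at step $n+1$, perturb the existing frame $(x_a^{(n)})$ by increments in $E_{n+1}\cap E_n^{\perp}$ calibrated to make $\sigma_{\min}(M_{n+1})$ sufficiently large, and perform a Gram--Schmidt step to restore orthonormality. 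Since increments in $E_n^{\perp}$ leave the earlier inner products $\langle x_a,v_{j,b}\rangle$ with $j\leq n$ untouched, only the reorthonormalization step can distort past matrices $M_j$, and the distortion is controlled by the norms of the increments.

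The hard part will be quantitative: the size of the increment at stage $n+1$ is inversely proportional to the principal angles between the new directions $v_{n+1,b}$ and $E_n$, which may be arbitrarily small. The challenge is to calibrate the increments so that they simultaneously (i) establish the required lower bound on $\sigma_{\min}(M_{n+1})$ and (ii) keep the cumulative deterioration of the past $M_j$'s small enough that the final bound sequence $(\delta_j)$ still satisfies $\lim_{j\to\infty}(1/j)\log\delta_j=0$. This trade-off, combining the polynomial finite-dimensional bound of \cref{corollaryIntersectionCube} with a careful tracking of the transversality geometry at each stage, is the heart of the proof.
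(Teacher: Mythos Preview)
Your proposal outlines a genuinely different strategy from the paper's, and the gap you yourself flag as ``the hard part'' is real and, as stated, unresolved. In your iterative scheme, when the new vectors $v_{n+1,b}$ are nearly contained in $E_n$, the increment in $E_{n+1}\cap E_n^{\perp}$ needed to push $\sigma_{\min}(M_{n+1})$ above any prescribed threshold can be arbitrarily large; the subsequent Gram--Schmidt renormalization then rescales all earlier inner products by a factor that is not controlled by $n$ alone but by the a priori uncontrolled principal angles of the specific sequence $(v_{j,b})$. You give no mechanism forcing the cumulative shrinkage to stay subexponential in $j$, and without one the scheme does not close.

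The paper sidesteps this difficulty entirely, by two devices you do not use. For $k=1$ it does \emph{not} iterate: it fixes in advance the box $\prod_{k\geq 1}[-k^{-2},k^{-2}]$ in coefficient space relative to an orthonormal system $(c_k)$ with $v_j\in\textnormal{span}(c_1,\dots,c_j)$, shows by a uniform volume estimate (\cref{lemmaIntersectionBox}) that for every finite $n$ the set of coefficients in this box satisfying $|\langle x,v_j\rangle|\geq cj^{-5}$ for all $j\leq n$ is nonempty, and then passes to the limit by compactness of the infinite box (Tychonoff). The a priori decay $|\alpha_k|\leq k^{-2}$ bounds $\|x\|$ once and for all, so no renormalization ever occurs and the bad-angle problem never arises. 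For general $k$ the paper does not attack the matrix $M_j$ directly; it inducts on $k$: given a well-separating $C_1\in\mathcal{G}_k$ for some enlargements $V_j^1\supset V_j$ of codimension $k$, it applies the $k=1$ case to the hyperplanes $V_j\oplus C_1$ to obtain $C_2\in\mathcal{G}_1$, and a planar double-cone estimate (\cref{lemmaLine}) shows that $C_1\oplus C_2$ is well-separating with $\delta_j$ comparable to $\delta_j^1\delta_j^2$. Both ideas---the pre-shrunk box plus compactness, and the inductive splitting via the cone argument---are precisely what make the quantitative trade-off you identify disappear.
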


	\begin{conjecture}\label[conjecture]{conjectureMainBanachSpaceExistence}
		Let $X$ be a Banach space and let $(V_j)_{j\in\natural}\subset\mathcal{G}^k(X)$. There exists a well-separating common complement $C\in\mathcal{G}_k(X)$ of $(V_j)_{j\in\natural}$.
	\end{conjecture}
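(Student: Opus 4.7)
The plan is to bootstrap the finite-family result \cref{remarkExistenceArbitraryCodimFiniteNumber} into an infinite statement by a compactness-plus-diagonalisation argument. For each $n\in\natural$, that remark together with \cref{corollaryBanachMazur} yields a common complement $C_n\in\mathcal{G}_k(X)$ of $(V_j)_{j=1}^n$ whose minimal degree of transversality against $V_1,\dots,V_n$ is bounded below by some polynomial $c\, n^{-\alpha}$ with constants $c,\alpha>0$ depending only on $k$. Were one able to extract a limit $C\in\mathcal{G}_k(X)$ of a subsequence $(C_{n_\ell})$ in a topology for which the degree of transversality against a fixed $V_j$ is continuous, then for each $j$ a bound of the form $\inf_{x\in C,\ \|x\|=1}\textnormal{d}(x,V_j)\geq c'\, j^{-\alpha}$ would survive in the limit, and by \cref{remarkWellSeparatingRational} the resulting $C$ would be well-separating.

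The crux is therefore the limit extraction, which in the Hilbert case is handled by weak compactness of the unit ball and the availability of orthogonal projections, neither of which is at hand for a general Banach space $X$. My proposal is to pass through the finite-codimensional quotients $Y_n:=X/(V_1\cap\dots\cap V_n)$: each $Y_n$ is finite-dimensional of dimension at most $nk$, so inside it one applies \cref{corollaryIntersectionCube}, translated via \cref{theoremBanachMazur}, to obtain a common complement $\tilde C_n\subset Y_n$ of the quotient images $\pi_n V_1,\dots,\pi_n V_n$ with polynomial transversality. A careful inductive choice should force the tower maps $Y_{n+1}\to Y_n$ to send $\tilde C_{n+1}$ close to $\tilde C_n$, after which an inverse-limit construction produces a candidate $k$-dimensional subspace of $X$. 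An alternative route would form an ultraproduct $\hat X$ of copies of $X$, build an ultralimit of the sequence $(C_n)$ inside $\hat X$, and push the result back to $X$ via the principle of local reflexivity, replacing $\hat C$ by a nearby $k$-dimensional subspace of $X$ whose transversality against each $V_j$ is at least $\delta_j/2$.

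The main obstacle in both routes is the absence of a canonical projection. In the Hilbert case the orthogonal projections onto $W_n^\perp:=(V_1\cap\dots\cap V_n)^\perp$ all have norm one and compose coherently across $n$, which is what allows the finite-dimensional constructions in the $Y_n$ to be glued together inside $X$. In a Banach space the projections onto candidate complements of $W_n$ are not canonical, and their norms can grow with $n$; \cref{theoremBanachMazur} controls the Banach--Mazur distance of $Y_n$ to $\ell_2^{\dim Y_n}$ only up to $\sqrt{nk}$, which is harmless in the polynomial accounting above but does not by itself yield a single subspace $C\subset X$ compatible with the whole inverse system of quotients. Closing this gap---supplying a Banach substitute for the orthogonal projection that permits a coherent lift of complements across all $Y_n$, or alternatively a selection principle that makes the $C_n$ themselves converge in the gap metric on $\mathcal{G}_k(X)$---is the missing ingredient, and is exactly the reason that the assertion is posed as a conjecture rather than proved here.
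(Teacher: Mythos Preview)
The statement is posed in the paper as a conjecture and is not proved there; the paper's contribution is the reduction that the conjecture follows once \cref{lemmaExistenceHilbertSpace} is established for Banach spaces (and, in the separable case, for $X=l^1$). Your outline likewise ends with an acknowledged gap, which is appropriate, but there is an earlier and more concrete difficulty that you pass over.

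In your first paragraph you move from the finite bound $\inf_{x\in C_n,\,\|x\|=1}\textnormal{d}(x,V_j)\geq c\,n^{-\alpha}$ for all $j\leq n$ to the assertion that a limit $C$ would satisfy $\inf_{x\in C,\,\|x\|=1}\textnormal{d}(x,V_j)\geq c'\,j^{-\alpha}$. This inference fails: for a fixed $j$ the bound available along any subsequence $(C_{n_\ell})$ is $c\,n_\ell^{-\alpha}\to 0$, so even a perfectly behaved limit yields only the trivial lower bound. What is actually needed is a $j$-dependent (not $n$-dependent) bound already at the finite stage. This is precisely what \cref{lemmaIntersectionBox} supplies in the Euclidean case, producing $|\langle x,v_j\rangle|\geq c\,j^{-5}$ independently of how many further vectors are appended; the paper then passes to $n=\infty$ via Tychonoff compactness on the box $\prod_k[-k^{-2},k^{-2}]$, which is aligned with an orthonormal flag adapted to the $v_j$. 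Transplanting this to a Banach quotient $Y_n$ through \cref{theoremBanachMazur} reintroduces a factor $\sqrt{\dim Y_n}\leq\sqrt{nk}$ that depends on $n$, so the transversality against each $V_j$ is again dragged to zero. Thus your inverse-limit and ultraproduct routes meet the obstruction you name at the end---the absence of a canonical norm-one projection system---at an earlier point than you indicate: without a Banach analogue of the orthonormal flag underlying \cref{lemmaIntersectionBox} and \cref{lemmaExistenceHilbertSpace}, the finite-stage inputs are already too weak to survive any limit procedure.
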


	\cref{conjectureMainBanachSpaceExistence} is true if we can prove \cref{lemmaExistenceHilbertSpace} for Banach spaces. Furthermore, if $X$ is separable, then the problem reduces to solving \cref{lemmaExistenceHilbertSpace} for $X=l^1$. Indeed, every separable Banach space is isomorphic to a quotient of $l^1$. Now, let $\pi:l^1\to l^1/A$ be a quotient map. Then, $\pi$ induces a map $\mathcal{G}^1(l^1/A)\to\mathcal{G}^1(l^1)$ by $V\mapsto\pi^{-1}V$. It holds $\textnormal{d}(\alpha,\pi^{-1}V)=\textnormal{d}(\pi\alpha,V)$ for $\alpha\in l^1$. Hence, well-separating common complements in $l^1$ project onto well-separating common complements in $l^1/A$.\\
	
	We start with two lemmata needed to prove \cref{theoremMainHilbertSpaceExistence} for $k=1$. The first lemma is similar to \cref{corollaryIntersectionCube}.
	
	\begin{lemma}\label[lemma]{lemmaIntersectionBox}
		Let $(v_j)_{j=1}^n\subset\real^n$ be unit vectors s.t. $v_j\in\real^j\times\{0\}$. There exists an absolute constant $c>0$ and  $x=(x_1,\dots,x_n)^T\in\real^n$ s.t. $|x_j|\leq j^{-2}$ and $|\langle x,v_j\rangle|\geq cj^{-5}$.
	\end{lemma}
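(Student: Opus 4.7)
The plan is to mimic the averaging argument of \cref{corollaryIntersectionCube} on the weighted box $P := \prod_{i=1}^n [-i^{-2}, i^{-2}]$ rather than a cube, so that membership in $P$ automatically encodes the required bound $|x_j| \leq j^{-2}$. For each $j$ I will estimate the measure of the bad slab $B_j := \{x \in P : |\langle x, v_j \rangle| \leq \delta_j\}$ with $\delta_j := c j^{-5}$ and pick an absolute constant $c > 0$ so small that $\sum_{j=1}^n \mu(B_j) < \mu(P)$; any point of $P \setminus \bigcup_j B_j$ then does the job.

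For the per-slab estimate I will not project onto a face as in \cref{lemmaIntersectionPolytope}, but slice along a single coordinate. Since $v_j$ is a unit vector of $\real^j$, pigeonhole yields an index $i^* = i^*(j) \in \{1, \dots, j\}$ with $|v_{j, i^*}| \geq j^{-1/2}$. Freezing the coordinates $x_i$ for $i \neq i^*$, the map $x_{i^*} \mapsto \langle x, v_j \rangle$ is affine with slope $v_{j, i^*}$, so the $x_{i^*}$-slice inside $B_j$ has one-dimensional Lebesgue measure at most $2 \delta_j / |v_{j, i^*}| \leq 2 \delta_j \sqrt{j}$. Since $\prod_{i \neq i^*} 2 i^{-2} = \mu(P) (i^*)^2 / 2$, Fubini gives
\[
\mu(B_j) \leq 2 \delta_j \sqrt{j} \cdot \frac{(i^*)^2}{2} \mu(P) \leq \delta_j j^{5/2} \mu(P).
\]

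Summing over $j$ and using $\sum_{j=1}^\infty j^{-5/2} = \zeta(5/2) < \infty$, I obtain $\sum_{j=1}^n \mu(B_j) \leq c \zeta(5/2) \mu(P)$, which is strictly less than $\mu(P)$ for any $c < 1/\zeta(5/2)$, uniformly in $n$. Such a $c$ is therefore an absolute constant, and any $x \in P \setminus \bigcup_j B_j$ satisfies both $|x_j| \leq j^{-2}$ and $|\langle x, v_j \rangle| > c j^{-5}$.

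The main step requiring care is the exponent bookkeeping: the weighted box costs two powers of $j$ through the factor $(i^*)^2 \leq j^2$, while pigeonhole costs one half-power through the slope bound, for a total transverse loss of $j^{5/2}$. The exponent $-5$ in $\delta_j$ leaves a $j^{-5/2}$ summable tail with plenty of room, which is precisely what forces the constant $c$ to come out independent of $n$.
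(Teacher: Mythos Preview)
Your argument is correct and shares the paper's overall strategy: work in the weighted box $P=\prod_{i}[-i^{-2},i^{-2}]$, bound the measure of each bad slab $B_j$, sum, and pick a survivor. The difference is in the slab estimate. The paper invokes \cref{lemmaIntersectionPolytope} to compute
\[
\textnormal{vol}_{n-1}(\Pi_{V_j}P)=\tfrac12\,\textnormal{vol}_n(P)\sum_{i=1}^{j}i^2|\langle e_i,v_j\rangle|\le \tfrac12\,\textnormal{vol}_n(P)\,j^{3},
\]
giving $\mu(B_j)\le \delta_j j^{3}\,\textnormal{vol}_n(P)$. You instead pigeonhole a coordinate $i^*\le j$ with $|v_{j,i^*}|\ge j^{-1/2}$ and apply Fubini along that single axis, obtaining $\mu(B_j)\le \delta_j j^{5/2}\mu(P)$. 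Your route is more elementary, since it avoids the polytope projection formula entirely, and it even yields a slightly sharper intermediate exponent; either bound is of course more than enough for the lemma. A final cosmetic difference: the paper normalises the surviving point at the end, whereas you take $x\in P$ directly, which makes the coordinate bound $|x_j|\le j^{-2}$ immediate from membership in $P$.
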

	\begin{proof}
		Let $P=\prod_{j=1}^{n}[-j^{-2},j^{-2}]$ and let $V_j$ be the hyperplane orthogonal to $v_j$. By \cref{lemmaIntersectionPolytope} we have
		\begin{align*}
		\textnormal{vol}_{n-1}(\Pi_{V_j} P)&=\sum_{i=1}^{n}\left(\prod_{k=1,k\neq i}^{n}2k^{-2}\right)|\langle e_i,v_j\rangle|\\
		&=\frac{1}{2}\textnormal{vol}_n(P)\sum_{i=1}^{n}i^2|\langle e_i,v_j\rangle|\\
		&=\frac{1}{2}\textnormal{vol}_n(P)\sum_{i=1}^{j}i^2|\langle e_i,v_j\rangle|\\
		&\leq \frac{1}{2}\textnormal{vol}_n(P)j^3.
		\end{align*}
		Now, let $\delta_j:=3\pi^{-2}j^{-5}$. We estimate
		\begin{align*}
		\mu\left(\left\{y\in P\ |\ \exists j:\ |\langle y,v_j\rangle|\leq\delta_j \right\}\right)&\leq\sum_{j=1}^{n}\mu\left(\left\{y\in P\ :\ |\langle y,v_j\rangle|\leq\delta_j \right\}\right)\\
		&\leq\sum_{j=1}^{n}2\delta_j\textnormal{vol}_{n-1}(\Pi_{V_j} P)\\
		&\leq\textnormal{vol}_n(P)\sum_{j=1}^{n}\delta_jj^3\\
		&\leq\textnormal{vol}_n(P)3\pi^{-2}\sum_{j=1}^{\infty}j^{-2}\\
		&=\frac{1}{2}\textnormal{vol}_n(P).
		\end{align*}
		Thus, there must be an element $y\in P$ with $|\langle y,v_j\rangle|\geq\delta_j$ for all $j$. Since $\|y\|_2^2\leq\sum_{j=1}^{\infty}j^{-4}=\frac{\pi^4}{90}$, writing $x:=y/\|y\|_2$ yields $|\langle x,v_j\rangle|\geq\delta_j/\|y\|_2\geq cj^{-5}$ with $c:=3\sqrt{90}\pi^{-4}$.
	\end{proof}
	
	\begin{lemma}\label[lemma]{lemmaExistenceHilbertSpace}
		Let $H$ be a Hilbert space and let $(\varphi_j)_{j\in\natural}\subset H'$ be a sequence of bounded linear functionals of norm $1$. There exists a sequence $(\delta_j)_{j\in\natural}\subset\real_{>0}$ and a unit vector $x\in H$ s.t.
		\begin{equation*}
		\lim_{j\to\infty}\frac{1}{j}\log \delta_j=0
		\end{equation*}
		and
		\begin{equation*}
		|\varphi_j(x)|\geq \delta_j
		\end{equation*}
		for all $j$.
	\end{lemma}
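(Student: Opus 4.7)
The plan is to reduce the infinite-dimensional statement to a sequence of finite-dimensional instances of \cref{lemmaIntersectionBox} and then extract a limit. First, I would use Riesz representation to write $\varphi_j(x) = \langle x, v_j\rangle$ for unit vectors $v_j \in H$, and then restrict attention to the separable closed span $W := \overline{\textnormal{span}}\{v_j : j \in \natural\}$. Applying Gram--Schmidt to $(v_j)_j$ in the given order, and padding with arbitrary unit vectors orthogonal to the previous ones whenever some $v_j$ is linearly dependent on its predecessors, produces an orthonormal basis $(e_i)_i$ of $W$ satisfying $v_j \in \textnormal{span}(e_1,\dots,e_j)$ for all $j$. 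Identifying $W$ with $\ell^2$ via this basis turns each $v_j$ into a unit vector supported in the first $j$ coordinates, which is exactly the hypothesis of \cref{lemmaIntersectionBox}.

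Next, for each $n$ I would apply \cref{lemmaIntersectionBox} to $v_1,\dots,v_n \in \real^n$ to obtain $x^{(n)} \in \real^n$ with $|x^{(n)}_i| \leq i^{-2}$ and $|\langle x^{(n)}, v_j\rangle| \geq c\, j^{-5}$ for all $j \leq n$. Embedding each $x^{(n)}$ into $\ell^2$ by padding with zeros places the whole sequence in the infinite product box $B := \prod_{i\in\natural}[-i^{-2}, i^{-2}]$. Since $B$ is compact in the product topology, a diagonal extraction yields a subsequence converging coordinatewise to some $x \in B \subset \ell^2$ with $\|x\|^2 \leq \sum_i i^{-4} = \pi^4/90$. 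The crucial point is that for each fixed $j$ the pairing $\langle \cdot, v_j\rangle$ depends only on the first $j$ coordinates, hence is continuous for the product topology, and the uniform lower bound passes to the limit: $|\langle x, v_j\rangle| \geq c\, j^{-5}$ for all $j$. Taking $j=1$ shows $x \neq 0$, so $\hat{x} := x/\|x\|$ is a unit vector satisfying $|\varphi_j(\hat{x})| = |\langle \hat{x}, v_j\rangle| \geq (c\sqrt{90}/\pi^2)\, j^{-5} =: \delta_j$, and the polynomial decay of $\delta_j$ gives $\tfrac{1}{j}\log \delta_j \to 0$.

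The main obstacle is precisely the passage from finite to infinite dimensions. What enables it is that the lower bound $c\,j^{-5}$ of \cref{lemmaIntersectionBox} does not depend on the ambient dimension $n$, together with the fact that the Gram--Schmidt preparation forces each $v_j$ to have finite support of size at most $j$. These two features render the compactness/diagonal limit essentially harmless: both the coordinatewise constraints $|x_i| \leq i^{-2}$ and the bounds $|\langle \cdot, v_j\rangle| \geq c j^{-5}$ are closed conditions in the product topology on $B$, so they survive the limit and deliver the sought $x \in H$.
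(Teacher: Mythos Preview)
Your proposal is correct and follows essentially the same route as the paper: Riesz representation, an orthonormal system $(e_i)$ with $v_j\in\textnormal{span}(e_1,\dots,e_j)$, application of \cref{lemmaIntersectionBox} in each finite truncation, and a compactness argument in the box $\prod_i[-i^{-2},i^{-2}]$ to pass to the limit. The only cosmetic differences are that the paper phrases the compactness step via the finite intersection property of nested closed sets rather than diagonal extraction, and it treats the case $\dim H<\infty$ separately (your Gram--Schmidt padding tacitly requires $\dim H=\infty$).
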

	\begin{proof}
		The case $\dim H<\infty$ follows from \cref{propositionPrevalenceFiniteDim} by observing that $|\varphi_j(x)|=\textnormal{d}(x,V_j)$ for $V_j:=\ker\varphi_j$. So, assume $\dim H = \infty$. By Riesz's representation theorem, we can write $\varphi_j=\langle v_j,\cdot\rangle$ for unit vectors $v_j\in H$. Now, take an orthonormal set $(c_j)_{j\in\natural}\subset H$ with $v_j\in\textnormal{span}(c_1,\dots,c_j)$. We get maps $\textnormal{pr}_j:H\to\real^j$ defined through 
		\begin{equation*}
		\textnormal{pr}_j(x):=\begin{pmatrix}
		\langle x,c_1\rangle\\
		\vdots\\
		\langle x,c_j\rangle
		\end{pmatrix}.
		\end{equation*}
		By construction $(\textnormal{pr}_j(v_i))_{i=1}^j\subset\real^j$ are unit vectors s.t. $\textnormal{pr}_j(v_i)\in\real^i\times\{0\}$. In particular, \cref{lemmaIntersectionBox} gives us the existence of an element $\alpha^j\in\prod_{k=1}^{j}[-k^{-2},k^{-2}]$ with $|\langle \alpha^j,\textnormal{pr}_j(v_i)\rangle|\geq ci^{-5}=:\tilde{\delta}_i$. Let $A_j$ be the set of all such $\alpha^j$, i.e.
		\begin{equation*}
		A_j:=\left\{\alpha^j\in\prod_{k=1}^{j}[-k^{-2},k^{-2}]\ \Bigg|\ \forall i\leq j:\ \left|\sum_{k=1}^j\alpha^j_k\langle v_i,c_k\rangle\right|\geq \tilde{\delta}_i\right\}.
		\end{equation*}
		We have shown that $A_j$ is a nonempty closed subset of $\real^j$. For $\alpha^j\in A_j$, we can define $y:=\sum_{k=1}^{j}\alpha_k^j c_k$. Since $\|y\|^2\leq\sum_{k=1}^{\infty}k^{-4}=\frac{\pi^4}{90}$, it holds
		\begin{equation*}
		|\varphi_i(x)|=\|y\|^{-1}|\langle v_i,y\rangle|=\|y\|^{-1}\left|\sum_{k=1}^{j}\alpha_k^j\langle v_i,c_k\rangle\right|\geq\sqrt{90}\pi^{-2}\tilde{\delta}_i=: \delta_i
		\end{equation*}
		for $i\leq j$, where $x:=y/\|y\|$. Thus, every $\alpha^j\in A_j$ yields an element $x\in H$ fulfilling the claim for $\varphi_1,\dots,\varphi_j$. The remainder of this proof treats the transition $j\to\infty$.\par
		By Tychonoff's theorem the space $\mathcal{B}:=\prod_{k=1}^{\infty}[-k^{-2},k^{-2}]$ equipped with the product topology is compact. Since the product topology is the coarsest topology such that the canonical projections $\pi_k:\mathcal{B}\to [-k^{-2},k^{-2}]$ are continuous, we find that $B_j:=(\pi_1\times\dots\times\pi_j)^{-1}A_j$ is a nonempty closed subset of $\mathcal{B}$. The set $B_j$ can be written as
		\begin{equation*}
		B_j=\left\{\alpha\in\mathcal{B}\ \Bigg|\ \forall i\leq j:\ \left|\sum_{k=1}^{\infty}\alpha_k\langle v_i,c_k\rangle\right|\geq \tilde{\delta}_i\right\}.
		\end{equation*}
		From this form is becomes obvious that $B_1\supset B_2\supset \dots$ is a decreasing sequence of closed subsets of $\mathcal{B}$. In particular, $(B_j)_{j\in\natural}$ has the finite intersection property, i.e. finite intersection are nonempty. As $\mathcal{B}$ is compact, the intersection of all $B_j$ must be nonempty. Thus, we find $\alpha$ in
		\begin{equation*}
		\bigcap_{k=1}^{\infty}B_j=\left\{\alpha\in\mathcal{B}\ \Bigg|\ \forall j\in\natural:\ \left|\sum_{k=1}^{\infty}\alpha_k\langle v_j,c_k\rangle\right|\geq \tilde{\delta}_j\right\}.
		\end{equation*}
		Similar to above, we set $y:=\sum_{k=1}^{\infty}\alpha_kc_k$. Again, it holds $\|y\|^2\leq\frac{\pi^4}{90}$. Defining $x:=y/\|y\|$, we get
		\begin{equation*}
		|\varphi_j(x)|=\|y\|^{-1}|\langle v_j,y\rangle|=\|y\|^{-1}\left|\sum_{k=1}^{\infty}\alpha_k\langle v_j,c_k\rangle\right|\geq\sqrt{90}\pi^{-2}\tilde{\delta}_j=\delta_j.
		\end{equation*}
	\end{proof}
	
	\begin{remark}\label[remark]{remarkExistenceHilbertSpaceDelta}
		The proof shows that $\delta_j$ can be chosen as $cj^{-5}$ for some constant $c>0$. Improvements of the exponent of $j$ are possible. For instance, one may use $j^{-(1+\epsilon)}$ instead of $j^{-2}$ to define the polytope in $\cref{lemmaIntersectionBox}$. However, since our goal is only to find an at most polynomially decaying lower bound, we aimed for better readability at the cost of a worse estimate. 
	\end{remark}
	
	So far it is not known to the author if \cref{lemmaExistenceHilbertSpace} holds for Banach spaces instead of Hilbert spaces. However, aside from \cref{lemmaExistenceHilbertSpace}, the remainder of this paper can be proved for Banach spaces. Hence, from now on assume that $(X,\|.\|)$ is an arbitrary Banach space.
	
	\begin{proof}[proof of \cref{theoremMainHilbertSpaceExistence} for $k=1$]
		By Hahn-Banach there are bounded linear functionals $(\varphi_j)_{j\in\natural}\subset X'$ of norm $1$ s.t. $\ker \varphi_j=V_j$. Assume that we find $(\delta_j)_{j\in\natural}$ and $x\in X$ as described by \cref{lemmaExistenceHilbertSpace}. Since $|\varphi_j(x)|=\textnormal{d}(x,V_j)$, the subspace spanned by $x$ is a $\delta$-well-separating common complement of $(V_j)_{j\in\natural}$.
	\end{proof}
	
	To prove \cref{theoremMainHilbertSpaceExistence} for arbitrary $k$ we need the following lemma.
	
	\begin{lemma}\label[lemma]{lemmaLine}
		Let $X$ be a Banach space. Furthermore, assume $x_1,x_2\in B(0,1)$ are two vectors s.t. $\|x_1\|\geq \mu_1$ and $\textnormal{d}(x_2,\textnormal{span}(x_1))\geq\mu_2$ for some numbers $0<\mu_1, \mu_2\leq 1$. Then, it holds
		\begin{equation*}
		\underset{t\in\real}{\inf}\|t x_1+(1-t)x_2\|\geq \frac{1}{2\sqrt{5}}\mu_1\mu_2.
		\end{equation*}
	\end{lemma}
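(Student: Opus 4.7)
The plan is to produce two independent lower bounds on $\|tx_1+(1-t)x_2\|$ from the two hypotheses and then balance them via a case split on $s:=|1-t|$.

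First I would factor out $(1-t)$ (for $t\neq 1$) to write $tx_1+(1-t)x_2=(1-t)\bigl(x_2+\tfrac{t}{1-t}x_1\bigr)$. The parenthesised vector lies in the affine set $x_2+\textnormal{span}(x_1)$, so its norm is at least $\textnormal{d}(x_2,\textnormal{span}(x_1))\geq\mu_2$. This gives
\[
\|tx_1+(1-t)x_2\|\geq|1-t|\mu_2,
\]
which is trivially valid also at $t=1$. The second bound follows from the reverse triangle inequality combined with $\|x_1\|\geq\mu_1$ and $\|x_2\|\leq 1$:
\[
\|tx_1+(1-t)x_2\|\geq|t|\,\|x_1\|-|1-t|\,\|x_2\|\geq|t|\mu_1-|1-t|.
\]

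Next I would optimise in $s=|1-t|$. If $s\geq\tfrac12$, the first bound alone yields $\|tx_1+(1-t)x_2\|\geq\mu_2/2\geq\mu_1\mu_2/(2\sqrt5)$, using $\mu_1\leq 1\leq\sqrt5$. If $s<\tfrac12$, then $|t|\geq 1-s>0$, so the second bound gives at least $(1-s)\mu_1-s$. The pointwise maximum $\max\bigl(s\mu_2,\,(1-s)\mu_1-s\bigr)$ is the larger of an increasing and a decreasing affine function of $s$, so its minimum on $[0,\tfrac12]$ is attained at the crossover $s^{*}=\mu_1/(1+\mu_1+\mu_2)$, which indeed lies in $[0,\tfrac12]$ since $\mu_1\leq 1\leq 1+\mu_2$. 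The common value there is $\mu_1\mu_2/(1+\mu_1+\mu_2)\geq\mu_1\mu_2/3\geq\mu_1\mu_2/(2\sqrt5)$, using $\mu_1,\mu_2\leq 1$ and $3\leq 2\sqrt5$.

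I do not anticipate a real obstacle: the constant $\tfrac{1}{2\sqrt5}$ clearly carries slack, and the only non-routine point is noticing the factoring trick that converts $\|(1-t)x_2+tx_1\|$ into a distance of $x_2$ from $\textnormal{span}(x_1)$. After that the argument is a one-variable case split combined with the reverse triangle inequality.
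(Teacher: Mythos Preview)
Your argument is correct, and it takes a genuinely different route from the paper. The paper first reduces to $\dim X=2$, then invokes John's theorem (\cref{theoremBanachMazur}) to pass to the Euclidean plane at the cost of a factor $\sqrt{2}$ in each of $\mu_1,\mu_2$, and finally computes the distance from the origin to the line through $x_1$ and $x_2$ by an explicit planar-geometric argument (Pythagoras on a worst-case configuration), obtaining $\mu_1\mu_2/\sqrt{5}$ in $\real^2$ and hence $\mu_1\mu_2/(2\sqrt{5})$ in general. Your proof bypasses the Euclidean reduction entirely: the factoring $\|tx_1+(1-t)x_2\|=|1-t|\,\|x_2+\tfrac{t}{1-t}x_1\|\geq|1-t|\,\mu_2$ and the reverse triangle inequality are valid in any Banach space, so John's theorem is not needed. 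As a bonus, your case split actually yields the sharper constant $\mu_1\mu_2/3$, strictly better than $\mu_1\mu_2/(2\sqrt{5})$. The paper's approach has the advantage of a clear geometric picture (distance of a chord from the origin), while yours is shorter, more elementary, and self-contained.
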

	\begin{proof}
		The argument can be restricted to $\textnormal{span}(x_1,x_2)$. Thus, assume that $\dim X=2$. First, we look at $X=\real^2$ equipped with $\|.\|_2$. After a rotation we may assume $x_1=(\alpha_1,0)$ with $\alpha_1\geq\mu_1$. Now, the assumption on $x_2$ implies that its second coordinate has at least size $\mu_2$. Let $L$ be the line passing through $x_1$ and $x_2$ (see \cref{figurePlane}). We want to estimate the minimum distance between $L$ and the origin. Clearly, the distance becomes smallest if $L$ intersects the unit-circle at $(-\sqrt{1-\mu_2^2},\pm\mu_2)$. Hence, the task reduces to finding $\delta$ in \cref{figureTriangle}. Applying Pythagoras' theorem to find the diagonal $d$ of the big triangle and comparing ratios between catheti opposite to $\alpha$ and the hypotenuses, we find that
		\begin{equation*}
		\delta=\frac{\mu_1\mu_2}{d}=\frac{\mu_1\mu_2}{\sqrt{\mu_2^2+\left(\sqrt{1-\mu_2^2}+\mu_1\right)^2}}\geq\frac{1}{\sqrt{5}}\mu_1\mu_2.
		\end{equation*}
		Thus, the claim holds for the euclidean case.\par
		Now, let $X$ be any $2$-dimensional Banach space. By \cref{theoremBanachMazur} there exists an isomorphism $T$ from $X$ to $(\real^2,\|.\|_2)$ with $\|T\|\leq 1$ and $\|T^{-1}\|\leq\sqrt{2}$. Let $x_1,x_2\in X$ be as in the claim. It holds $Tx_1,Tx_2\in B(0,1)$, $\|Tx_1\|\geq \frac{\mu_1}{\sqrt{2}}$ and $\textnormal{d}(Tx_2,\textnormal{span}(Tx_1))\geq\frac{\mu_2}{\sqrt{2}}$. From the euclidean case we get
		\begin{equation*}
		\underset{t\in\real}{\inf}\|t x_1+(1-t)x_2\|\geq\underset{t\in\real}{\inf}\|t Tx_1+(1-t)Tx_2\|\geq\frac{1}{2\sqrt{5}}\mu_1\mu_2.
		\end{equation*}
	\end{proof}

	\begin{figure}
		\caption{Simplified planar case in \cref{lemmaLine}}\label[figure]{figurePlane}
		\begin{center}
			\includegraphics[width=1\linewidth]{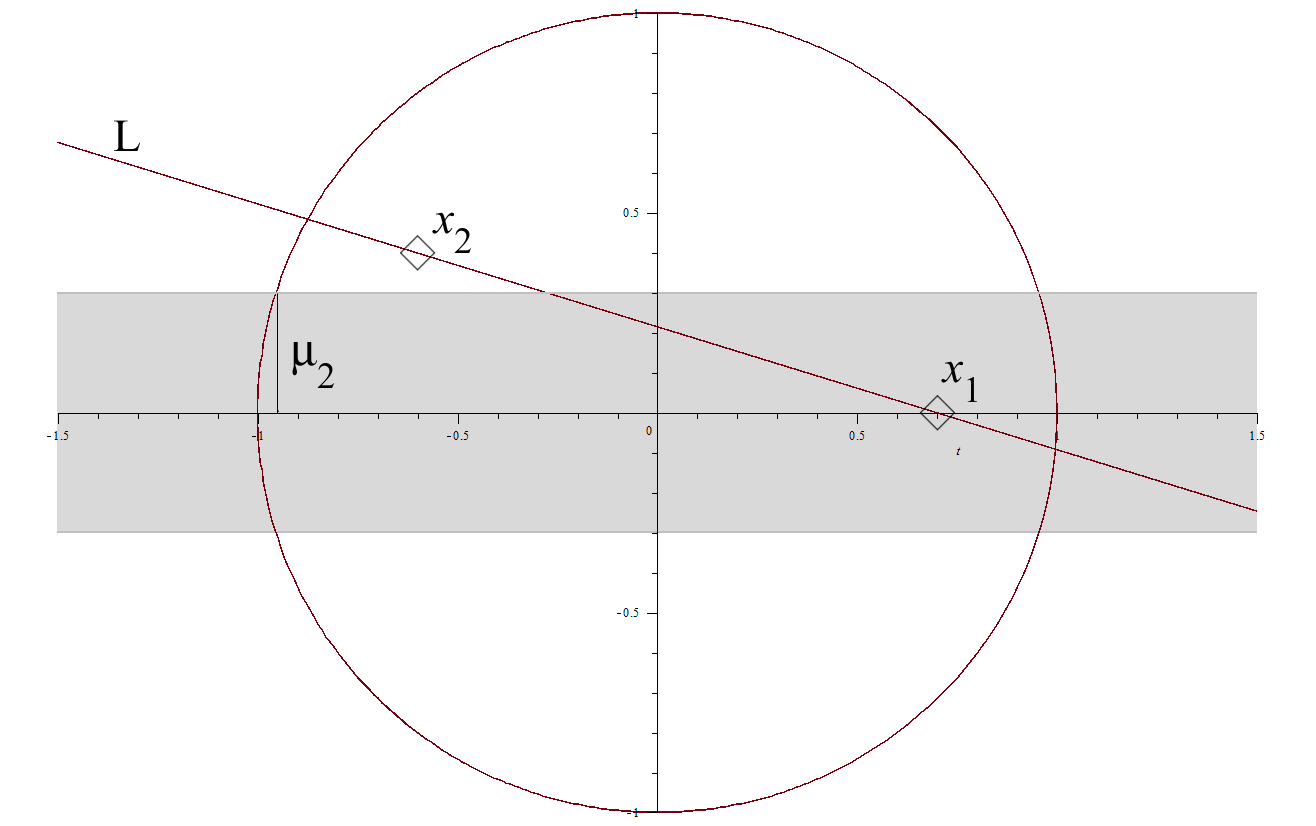}
		\end{center}
	\end{figure}

	\begin{figure}
		\caption{Triangle reduction in \cref{lemmaLine}}\label[figure]{figureTriangle}
		\begin{center}
			\includegraphics[width=1\linewidth]{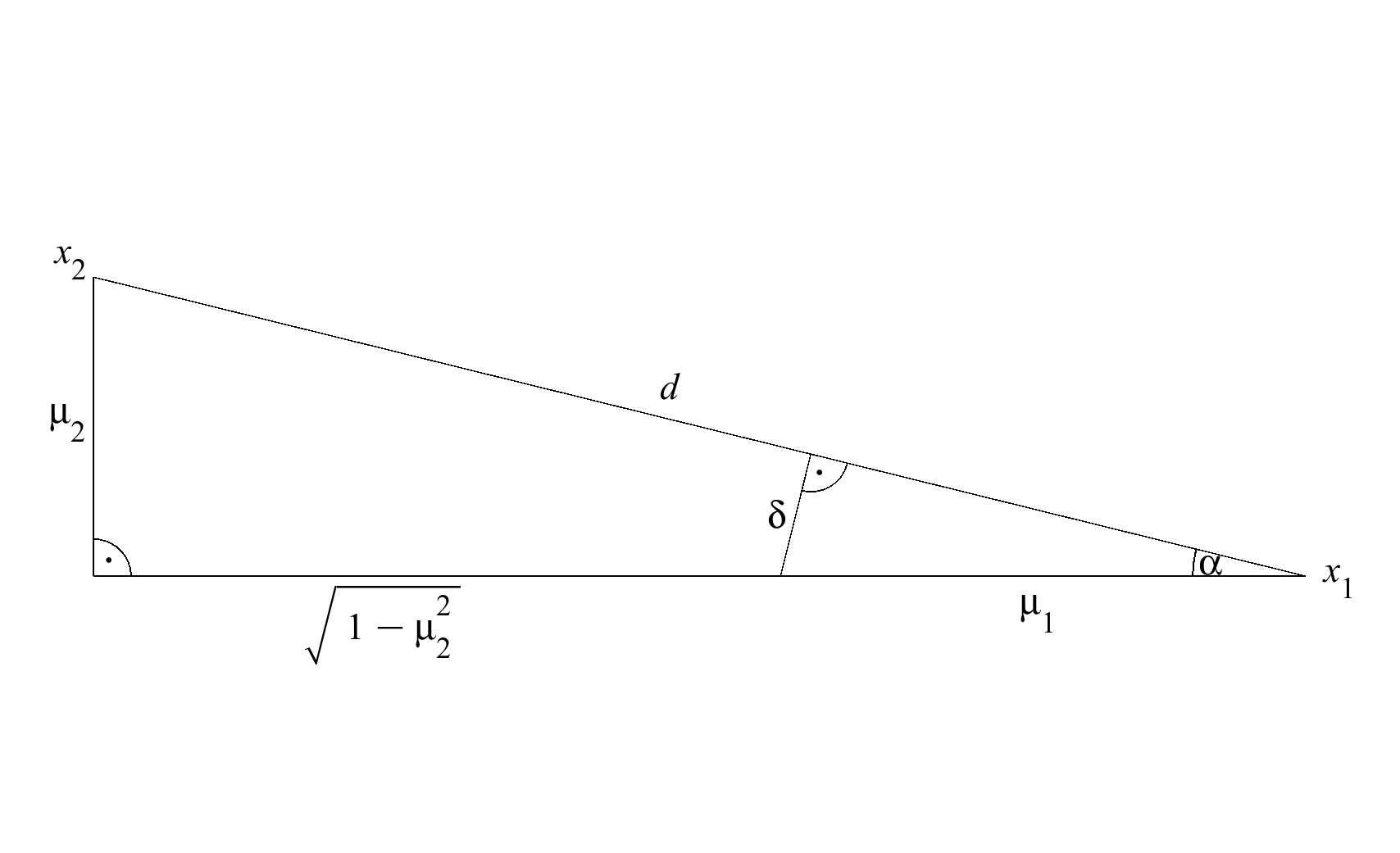}
		\end{center}
	\end{figure}
	
	\begin{proof}[proof of \cref{theoremMainHilbertSpaceExistence} for arbitrary $k$]
		The proof is done by induction over $k$. Assume that the claim holds true for $k\geq 1$. Let $(V_j)_{j\in\natural}\subset\mathcal{G}^{k+1}(X)$ be as in the claim and define $\pi_j:X\to X/V_j$ to be the associated quotient maps. We embed $(V_j)_{j\in\natural}$ into two different sequences of closed, complemented subspaces of $X$, one having codimension $k$ and the other having codimension $1$. Summing their well-separating common complements will yield a well-separating common complement for our initial sequence.\par
		First, take any $({V}^1_j)_{j\in\natural}\subset\mathcal{G}^{k}(X)$ with ${V}^1_j\supset V_j$. According to the codimension $k$ case we find a $\delta^1$-well-separating common complement $C_1\in\mathcal{G}_k(X)$ of $({V}^1_j)_{j\in\natural}$. It holds $\|\pi_j x_1\|\geq\textnormal{d}(x_1,V^1_j)\geq\delta^1_j$ for all $x_1\in C_1$ of norm $1$.\par
		Next, let $V_j^2:=V_j\oplus C_1$. Then, $({V}^2_j)_{j\in\natural}\subset\mathcal{G}^{1}(X)$ is a sequence of closed, complemented subspaces of codimension $1$. Hence, we find a $\delta^2$-well-separating common complement $C_2\in\mathcal{G}_1(X)$. Let $x_2$ be one of the two unit vectors of $C_2$. We have $\textnormal{d}(\pm\pi_j x_2,\pi_j C^1)\geq\delta_j^2$.\par
		Let $C:=C_1\oplus C_2$. To check if $C$ is well-separating, we need to find a lower bound for $\|\pi_j x\|$ with $x\in C$ of norm $1$. We scale $x$ so that it intersects with a boundary element $c$ of the double cone 
		\begin{equation*}
		\Delta:=\left\{c\in C\ \big|\ c=tx_1+(1-t)(\pm x_2),\ t\in[0,1],\ x_1\in B^{C_1}(0,1)\right\},
		\end{equation*}
		which is contained in $B^C(0,1)$ (see \cref{figureCone}). The boundary $\partial\Delta$ is made up of line segments connecting unit vectors $x_1\in C_1$ with one of the two apexes $\pm x_2\in C_2$. By \cref{lemmaLine} the image of each line segment under $\pi_j$ is far enough from the origin, i.e.
		\begin{equation*}
		\underset{t\in[0,1]}{\inf}\|t\pi_jx_1+(1-t)(\pm \pi_jx_2)\|\geq \frac{1}{2\sqrt{5}}\delta_j^1\delta_j^2=:\delta_j.
		\end{equation*}
		Since any $x\in C$ of norm $1$ can be written as $x=\lambda c$ for some $\lambda\geq 1$ and $c\in\partial\Delta$, it holds $\textnormal{d}(x,V_j)=\|\pi_j x\|=\lambda\|\pi_j c\|\geq\lambda\delta_j\geq\delta_j$. Thus, $C$ is a $\delta$-well-separating common complement of $(V_j)_{j\in\natural}$. 
	\end{proof}		

	\begin{figure}
		\caption{Double cone from proof of \cref{theoremMainHilbertSpaceExistence}}\label[figure]{figureCone}
		\begin{center}
			\includegraphics[width=1\linewidth]{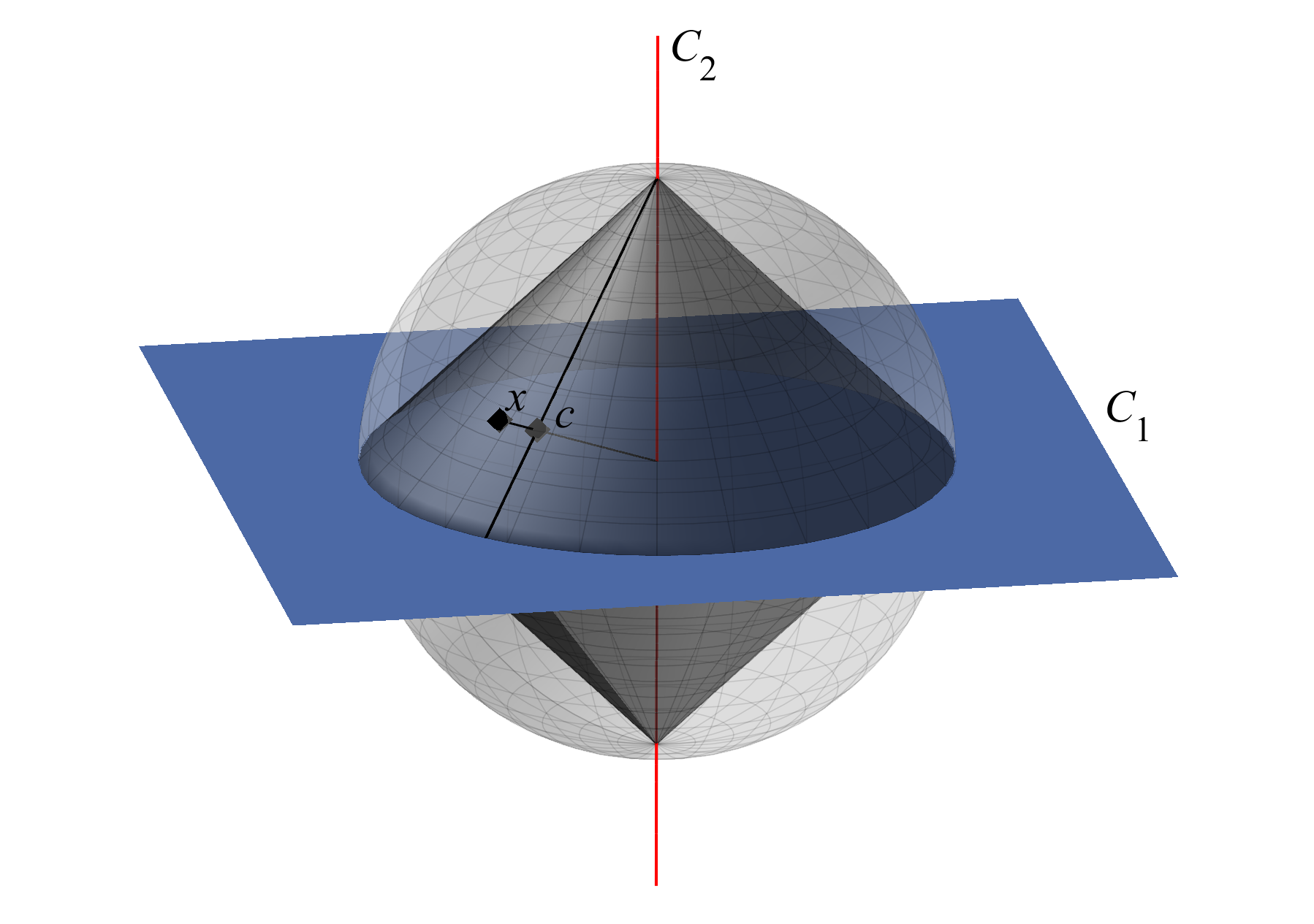}
		\end{center}
	\end{figure}

\subsection{Prevalence}\label{subsectionPrevalence}

	\begin{proposition}\label[proposition]{propositionPrevalenceFiniteDim}
		Let $X$ be a Banach space of dimension $n<\infty$ and let $(V_j)_{j\in\natural}\subset\mathcal{G}^1(X)$ be hyperplanes. Then, almost all $x\in X$ span a well-separating common complement of $(V_j)_{j\in\natural}$.
	\end{proposition}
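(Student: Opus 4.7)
The plan is to reduce the proposition to a Borel-Cantelli argument based on a uniform slab measure estimate. Since $\dim X=n<\infty$, ``almost all'' refers to almost everywhere with respect to any Haar measure $\mu$ on $X$ (and in finite dimensions this will coincide with prevalence). By Hahn-Banach I pick $\varphi_j\in X'$ of unit norm with $\ker\varphi_j=V_j$, so that for $x\in X\setminus\{0\}$,
\begin{equation*}
\underset{y\in\textnormal{span}(x),\,\|y\|=1}{\inf}\textnormal{d}(y,V_j)=\frac{|\varphi_j(x)|}{\|x\|}.
\end{equation*}
In view of \cref{remarkWellSeparatingRational}, it suffices to show that for $\mu$-a.e.\ $x\in X$ there is a finite $J(x)$ with $\varphi_j(x)\neq 0$ for all $j<J(x)$ and $|\varphi_j(x)|/\|x\|\geq j^{-2}$ for all $j\geq J(x)$.

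The key input is the uniform-in-$j$ estimate
\begin{equation*}
\mu\bigl(\{x\in B_X(0,R):|\varphi_j(x)|\leq\eta\|x\|\}\bigr)\leq C(R,X)\,\eta,\qquad\eta\in(0,1],
\end{equation*}
which I would prove via \cref{theoremBanachMazur}. An isomorphism $T:X\to(\real^n,\|.\|_2)$ with $\|T\|\|T^{-1}\|\leq\sqrt n$ sends $\mu$ to a constant multiple of Lebesgue measure and sends $\varphi_j$ to a Euclidean functional $\tilde\varphi_j=\langle \tilde v_j,\cdot\rangle$ whose Euclidean norm $\|\tilde v_j\|_2$ is confined to $[1/\|T\|,\|T^{-1}\|]$ independently of $j$. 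The slab estimate then follows from polar integration: the spherical slab $\{\omega\in S^{n-1}:|\langle \tilde v_j,\omega\rangle|\leq\eta\}$ has surface measure at most a dimensional constant times $\eta/\|\tilde v_j\|_2$, hence times $\|T\|\eta$, and $B_X(0,R)$ is contained in a Euclidean ball of fixed radius $\|T\|R$.

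Setting $\eta=j^{-2}$ makes the right-hand side summable, so Borel-Cantelli implies that for each fixed $R$, $\mu$-a.e.\ $x\in B_X(0,R)$ satisfies $|\varphi_j(x)|/\|x\|\geq j^{-2}$ for every sufficiently large $j$; taking a countable union over $R\in\natural$ removes the ball restriction. In addition $\bigcup_{j\in\natural}V_j$ is a countable union of Haar-null sets, so almost every $x$ avoids every $V_j$ and each $|\varphi_j(x)|$ is strictly positive; replacing $j^{-2}$ by $\min(j^{-2},|\varphi_j(x)|/\|x\|)$ for the residual finitely many indices keeps the decay polynomial, hence subexponential, and exhibits $\textnormal{span}(x)$ as a well-separating common complement. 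The main obstacle is the uniform-in-$j$ slab estimate, and its uniformity rests on two facts I can control simultaneously: all $\varphi_j$ have the same dual norm, and the Banach-Mazur transfer puts us in a Euclidean setting where cone measure depends only on the angle, not on the particular unit vector $\tilde v_j/\|\tilde v_j\|_2$.
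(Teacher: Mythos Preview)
Your proposal is correct and follows essentially the same approach as the paper: reduce to the Euclidean setting, use a uniform slab-measure bound linear in the thickness parameter, and exploit summability of $j^{-2}$. The paper packages the conclusion slightly more directly by introducing a global parameter $\epsilon$ in $\delta_j^\epsilon=\epsilon j^{-2}$ and letting $\epsilon\to 0$ (so that for almost every $x$ there is an $\epsilon>0$ with $\textnormal{d}(x,V_j)\geq\epsilon j^{-2}$ for \emph{all} $j$ simultaneously), which avoids your separate Borel--Cantelli step and the subsequent patching of the finitely many residual indices.
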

	\begin{proof}
		Since well-separating common complements are retained when changing to an equivalent norm, we may assume that $(X,\|.\|)=(\real^n,\|.\|_2)$. Furthermore, we can restrict ourselves to $x\in B(0,1)\subset\real^n$. For $\epsilon>0$ define $\delta^{\epsilon}_j:=\epsilon j^{-2}$. We estimate
		\begin{align*}
		\mu(\{x\in B(0,1)\ |\ \exists j:\ \textnormal{d}(x,V_j)\leq\delta^{\epsilon}_j\})&\leq\sum_{j=1}^{\infty}\mu(\{x\in B(0,1)\ |\ \textnormal{d}(x,V_j)\leq\delta^{\epsilon}_j\})\\
		&\leq \sum_{j=1}^{\infty}2\delta^{\epsilon}_j\textnormal{vol}_{n-1}(B^{n-1}(0,1))\\
		&=\epsilon\frac{\pi^2}{3}\textnormal{vol}_{n-1}(B^{n-1}(0,1))\\
		&\underset{\epsilon\to 0}{\longrightarrow}0.
		\end{align*}
		Hence, for almost all $x\in B(0,1)$ there is an $\epsilon>0$ such that $\textnormal{span}(x)$ is a $\delta^{\epsilon}$-well-separating common complement of $(V_j)_{j\in\natural}$.
	\end{proof}
	
	There is no equivalent of the Lebesgue measure for arbitrary Banach or Hilbert spaces. In particular, the proof of \cref{propositionPrevalenceFiniteDim} does not generalize to the infinite-dimensional case. Even the notion of ``a.e.'' in the claim is not clear a priori. Instead of  ``Lebesgue a.e.'' we will use the concept of prevalence \cite{ott2005prevalence}.
	
	\begin{definition}\label[definition]{definitionPrevalence}
		A Borel subset $E\subset X$ of a Banach space is called \emph{prevalent} if there exists a Borel measure $\mu$ on X s.t.
		\begin{enumerate}
			\item $0<\mu(C)<\infty$ for some compact set $C\subset X$, and
			\item $E+x$ has full $\mu$-measure for all $x\in X$.
		\end{enumerate}
		A general subset $F\subset X$ is called \emph{prevalent} if it contains a prevalent Borel set. We say that \emph{almost every} element $x\in X$ lies in $F$.
	\end{definition}
	
	In \cite{ott2005prevalence} it is shown that prevalence satisfies the following genericity axioms.
	
	\begin{proposition}\label[proposition]{propositionPrevalenceAxioms}
		\begin{enumerate}
			The following are true:
			\item $F$ prevalent $\implies F$ dense in $X$,
			\item $L\supset G$, $G$ prevalent $\implies L$ prevalent,
			\item countable intersections of prevalent sets are prevalent,
			\item translations of prevalent sets are prevalent, and
			\item $G\subset\real^n$ is prevalent if and only if $G$ has full Lebesgue measure, i.e. its complement has measure zero.
		\end{enumerate}
	\end{proposition}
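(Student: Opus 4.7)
My plan is to reformulate each axiom in the dual language of transverse measures: a Borel set $E\subset X$ is prevalent iff there is a Borel measure $\mu$ with $0<\mu(C)<\infty$ for some compact $C\subset X$ and $\mu(E^c+x)=0$ for every $x\in X$. With this viewpoint, (2) is immediate because the Borel prevalent subset witnessing prevalence of $G$ also sits inside $L$. For (4), translating a Borel prevalent set $E$ by $y$ gives $(E+y)^c+x=E^c+(x+y)$, which is $\mu$-null, so the same measure witnesses prevalence of $E+y$. For (1), assume toward contradiction that $F^c\supset B(y_0,r)$; then the witnessing Borel set $E\subset F$ satisfies $E^c+x\supset B(y_0+x,r)$ with $\mu$-measure zero for every $x$, so every open ball of radius $r$ is $\mu$-null. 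Since the compact set $C$ is totally bounded it admits a finite cover by such balls, forcing $\mu(C)=0$ and contradicting the choice of $\mu$.

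For (5) the converse direction is witnessed by Lebesgue measure $\lambda$ restricted to $[0,1]^n$: if $\lambda(G^c)=0$, then by translation invariance $\lambda|_{[0,1]^n}(G^c+x)\leq\lambda(G^c+x)=0$. For the forward direction, pick a Borel prevalent $E\subset G$ with transverse $\mu$ and restrict $\mu$ to $C$ so it becomes finite; then Fubini applied to $\mathbf{1}_{E^c}(x+y)$ yields
\begin{equation*}
\mu(C)\,\lambda(E^c)=\int_{\real^n}\lambda(E^c-y)\,d\mu|_C(y)=\int_{\real^n}\mu|_C(E^c-x)\,d\lambda(x)=0,
\end{equation*}
the left equality by translation invariance of $\lambda$ and the right by transversality $\mu(E^c-x)=0$. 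Since $\mu(C)>0$ this forces $\lambda(E^c)=0$, whence the Lebesgue-outer measure of $G^c\subset E^c$ vanishes as well.

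The real obstacle is (3), where countably many transverse measures must be assembled into a single one. Given Borel prevalent $E_n$ with transverse probability measures $\mu_n$ of compact support, my plan is: (i) translate each $\mu_n$ by a support point so that $0\in\mathrm{supp}(\mu_n)$, which preserves transversality; (ii) fix $r_n>0$ with $\sum_n r_n<\infty$ and replace $\mu_n$ by its restriction to $\overline{B(0,r_n)}$, renormalized to a probability measure $\tilde\mu_n$ on a compact support $K_n\ni 0$ with $\|k\|\leq r_n$ for all $k\in K_n$---the renormalization is legitimate because $0$ lies in the support, and transversality is preserved because $\tilde\mu_n(E_n^c+x)\leq\mu_n(E_n^c+x)/\mu_n(\overline{B(0,r_n)})=0$; (iii) observe that $K:=\{\sum_n k_n : k_n\in K_n\}$ is compact in $X$ as the continuous image of the Tychonoff product $\prod_n K_n$ under the sum map, the series converging uniformly thanks to $\|k_n\|\leq r_n$ and $\sum_n r_n<\infty$; (iv) define $\nu$ as the weak limit of the partial convolutions $\nu_N:=\tilde\mu_1*\cdots*\tilde\mu_N$, convergence being guaranteed by the Wasserstein bound $W_1(\nu_N,\nu_{N+1})\leq\int\|y\|\,d\tilde\mu_{N+1}(y)\leq r_{N+1}$ and $\mathrm{supp}(\nu)\subset K$. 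Factoring $\nu=\tilde\mu_n*\rho_n$ for each $n$, where $\rho_n$ is the analogous infinite convolution omitting $\tilde\mu_n$, gives $\nu(E_n^c+x)=\int\tilde\mu_n(E_n^c+x-y)\,d\rho_n(y)=0$, so $\nu((\bigcap_n E_n)^c+x)\leq\sum_n\nu(E_n^c+x)=0$ and $\bigcap_n E_n\subset\bigcap_n F_n$ is prevalent. The delicate point is (ii), which substitutes for the unavailable transversality-preserving scaling operation: restricting a transverse measure to a small ball around a support point shrinks diameters without destroying transversality, enabling the infinite-convolution construction.
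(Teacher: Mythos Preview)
The paper does not give its own proof of this proposition; it simply records the five axioms and cites \cite{ott2005prevalence} for the argument. So there is nothing to compare against in the paper proper---your proposal supplies what the paper outsources.

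Your arguments are the standard ones and are correct. A couple of small points worth tightening. In the converse direction of (5) you should pass to a Borel subset $E\subset G$ with $\lambda(E^c)=0$ before invoking the definition, since $G$ itself need not be Borel; this is routine (take the complement of a $G_\delta$ null cover of $G^c$). In step~(ii) of (3), the reason $K_n$ is compact is that it is a closed subset of the original compact support of $\mu_n$, not that $\overline{B(0,r_n)}$ is compact---in infinite dimensions it is not---so it matters that you first reduced to compactly supported $\mu_n$. Finally, your Wasserstein/weak-limit construction of $\nu$ works, but it is slightly cleaner (and avoids any worry about whether weak limits preserve the factorisation $\nu=\tilde\mu_n*\rho_n$) to define $\nu$ directly as the pushforward of the product measure $\bigotimes_n\tilde\mu_n$ on $\prod_nK_n$ under the sum map; Fubini on the product then gives the factorisation immediately. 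Either route leads to the same conclusion.
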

	
	The last point implies that the notions of ``a.e.'' in the sense of Lebesgue and in the sense of prevalence coincide in finite-dimensional Banach spaces.\par
	To identify prevalent sets in infinite-dimensional spaces it is convenient to use probe spaces. A \emph{probe} is a finite-dimensional subspace $P\subset X$ of a Banach space. By identification with the standard euclidean space we can equip $P$ with a Borel measure $\lambda_P$. This measure induces a Borel measure $\mu_P$ on $X$ by $\mu_P(A):=\lambda_P(A\cap P)$ for Borel sets $A\subset X$. Using $\mu_P$ in \cref{definitionPrevalence} yields the following definition.
	
	\begin{definition}\label[definition]{definitionProbe}
		A finite-dimensional subspace $P\subset X$ is called a \emph{probe} for $F\subset X$ if there exists a Borel set $E\subset F$ s.t. $E+x$ has full $\mu_P$-measure for all $x\in X$.
	\end{definition} 
	
	\begin{proposition}\label[proposition]{propositionProbe}
		The existence of a probe for $F\subset X$ implies that $F$ is prevalent.
	\end{proposition}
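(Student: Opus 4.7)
The plan is to show that the measure $\mu_P$ associated with the probe $P$ itself serves as the witness in \cref{definitionPrevalence}. More precisely, by the probe hypothesis there is a Borel set $E\subset F$ such that $E+x$ has full $\mu_P$-measure for every $x\in X$; I would argue that this $E$ is a prevalent Borel set, from which it follows by definition that $F\supset E$ is prevalent.

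First I would check that $\mu_P$, defined by $\mu_P(A):=\lambda_P(A\cap P)$, is a Borel measure on $X$. For any Borel set $A\subset X$, the intersection $A\cap P$ is Borel with respect to the subspace topology on $P$, which coincides with the Euclidean topology under the identification used to define $\lambda_P$; countable additivity is then inherited from $\lambda_P$. Next I would verify condition (1) of \cref{definitionPrevalence}: take $C$ to be the closed unit ball of $P$. Because $\dim P<\infty$, $C$ is compact in $P$ and hence compact in $X$, while $\mu_P(C)=\lambda_P(C)$ is the Lebesgue volume of the Euclidean unit ball of dimension $\dim P$, which is strictly positive and finite. Condition (2) is immediate: it is exactly the defining property of $E$ coming from $P$ being a probe for $F$.

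I do not anticipate any real obstacle here; the content of the proposition lies entirely in the definitions. The substance of the probe construction is that one does not need to exhibit an \emph{ad hoc} measure on the infinite-dimensional space $X$: the pushforward of Lebesgue measure on any finite-dimensional subspace automatically supplies the required compact set of finite positive mass, reducing a prevalence statement to a translation-invariance-style condition along the directions of $P$.
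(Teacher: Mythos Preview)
Your argument is correct and is precisely the standard verification: $\mu_P$ is a Borel measure on $X$, the closed unit ball of $P$ is compact in $X$ with finite positive $\mu_P$-mass, and the full-measure condition on translates is built into \cref{definitionProbe}. The paper itself does not supply a proof of \cref{propositionProbe}; it is stated as a known fact from the prevalence literature (cf.\ \cite{ott2005prevalence}), so there is nothing further to compare.
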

	
	With the additional terminology we are ready to state our main result.
	
	\begin{theorem}\label[theorem]{theoremMainHilbertSpacePrevalence}
		Let $H$ be a Hilbert space and let $(V_j)_{j\in\natural}\subset\mathcal{G}^k(H)$. The set of all $(x_1,\dots,x_k)\in H^k$, such that $\textnormal{span}(x_1,\dots,x_k)$ is a well-separating common complement of $(V_j)_{j\in\natural}$, is prevalent.
	\end{theorem}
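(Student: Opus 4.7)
The plan is to construct a probe and apply \cref{propositionProbe}. By \cref{theoremMainHilbertSpaceExistence} together with \cref{remarkExistenceHilbertSpaceDelta}---whose polynomial bound $\delta_j\geq cj^{-5}$ for $k=1$ propagates through the inductive proof of \cref{theoremMainHilbertSpaceExistence} for arbitrary $k$ (since \cref{lemmaLine} multiplies two polynomial sequences)---I fix a well-separating common complement $C_0\in\mathcal{G}_k(H)$ of $(V_j)_{j\in\natural}$ with polynomial lower bound $\delta_j^0\geq c_0 j^{-p_0}$, and pick an orthonormal basis $(y_1,\dots,y_k)$ of $C_0$. The natural probe is then $P:=C_0^k\subset H^k$, a $k^2$-dimensional subspace, equipped with the Lebesgue measure induced by the coordinates $A=(a_{il})\in\real^{k\times k}$ via $c_i=\sum_l a_{il}y_l$. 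Letting $E$ denote the (Borel) subset of tuples whose span is a polynomially well-separating common complement, it suffices to show that for every $(z_1,\dots,z_k)\in H^k$, Lebesgue-a.e.\ $A\in\real^{k^2}$ makes $x_i:=z_i+\sum_l a_{il}y_l$ span such a complement.

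To make this quantitative, I let $P_j:H\to V_j^\perp\cong\real^k$ denote the orthogonal projection and encode $(P_j y_l)_l$ and $(P_j z_i)_i$ in a fixed orthonormal basis of $V_j^\perp$ as matrices $Y_j,Z_j\in\real^{k\times k}$. A direct computation gives $P_j\sum_i\alpha_i x_i=(Z_j+Y_jA^T)\alpha$ for $\alpha\in\real^k$, and orthonormality of $(y_l)$ yields $\sigma_{\min}(Y_j)\geq\delta_j^0$. Factoring $Z_j+Y_jA^T=Y_j(A^T+B_j)$ with $B_j:=Y_j^{-1}Z_j$ (whose norm satisfies $\|B_j\|\leq\|Z\|_{\mathrm{op}}/\delta_j^0\leq c_1 j^{p_0}$) gives $\sigma_{\min}(Z_j+Y_jA^T)\geq\delta_j^0\sigma_{\min}(A^T+B_j)$. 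Since $\|\sum_i\alpha_i x_i\|\leq(\|Z\|_{\mathrm{op}}+\|A\|)\|\alpha\|$, the degree of transversality of $\mathrm{span}(x_1,\dots,x_k)$ versus $V_j$ is at least $\delta_j^0\sigma_{\min}(A^T+B_j)/(\|Z\|_{\mathrm{op}}+\|A\|)$. I therefore only need a polynomial lower bound on $\sigma_{\min}(A^T+B_j)$ in $j$ for Lebesgue-a.e.\ $A$.

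This reduces to a tube estimate around the singular-matrix variety. The set of singular $k\times k$ matrices is a degree-$k$ real algebraic hypersurface in $\real^{k^2}$, and $\sigma_{\min}$ equals the Frobenius distance to it. By the standard tube bound for real algebraic varieties (e.g., Wongkew's theorem), the $\epsilon$-neighborhood of this hypersurface within a ball of radius $R$ has Lebesgue measure at most $c_k R^{k^2-1}\epsilon$. Using the measure-preserving translation by $-B_j^T$ and restricting $A$ to $[-R,R]^{k^2}$, I obtain $\mu_L(\{A\in[-R,R]^{k^2}:\sigma_{\min}(A^T+B_j)<\epsilon_j\})\leq c_k'(R+c_1 j^{p_0})^{k^2-1}\epsilon_j$. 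Choosing $\epsilon_j:=\epsilon j^{-q}$ with $q>p_0(k^2-1)+1$ makes the resulting sum over $j$ bounded by a constant times $\epsilon$. Sending $\epsilon\to 0$ and taking the countable union over $R\in\natural$ then shows that almost every $A\in\real^{k^2}$ admits some $\epsilon(A)>0$ with $\sigma_{\min}(A^T+B_j)\geq\epsilon(A)j^{-q}$ for all $j$, which is the required polynomial decay.

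The main obstacle is precisely this tube estimate for the singular-matrix variety, uniform in bounded regions, balanced against the polynomial growth $\|B_j\|\leq c_1 j^{p_0}$: the exponent $q$ must be calibrated so that the contribution $(c_1 j^{p_0})^{k^2-1}$ from the translated box is still overwhelmed by $j^{-q}$ and summable, forcing $q>p_0(k^2-1)+1$. A minor bookkeeping point is the Borel measurability of $E$, which follows by writing it as a countable union, over rational parameters $p$ and $c$, of closed conditions on $(x_1,\dots,x_k)\in H^k$ of the form $\inf_{\|\sum\alpha_i x_i\|=1}\mathrm{d}(\sum\alpha_i x_i,V_j)\geq cj^{-p}$ for all $j$.
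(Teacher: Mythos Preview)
Your proposal is correct and follows the paper's overall architecture: take a well-separating complement $C_0$ from \cref{theoremMainHilbertSpaceExistence}, use $C_0^k$ as a probe, and reduce to a Lebesgue-a.e.\ lower bound on the smallest singular value of a matrix of the form $A+B_j$, where the translations $B_j$ grow only polynomially in $j$. The difference is in how that singular-value bound is obtained and in the level of generality. The paper proves, by a direct Fubini computation (\cref{lemmaDeterminantLowerBoundGeneric}), the \emph{translation-invariant} sublevel estimate $\mu(\{A\in B(0,1)^k:|\det(A+\tilde A)|\le\eta\})\le c\eta$ and then passes to $\sigma_{\min}$ via the adjugate formula (\cref{lemmaInverseUpperBoundGeneric}); this yields the sharper loss $\delta_j^{\,k-1}$ and, crucially, works in any Banach space (the paper's \cref{propositionPrevalence} never uses an inner product, decomposing $x_i$ along $C_0\oplus V_j$ instead of orthogonally). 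Your route is Hilbert-specific---you rely on the orthogonal projection $P_j$ onto $V_j^\perp$ and on $\sigma_{\min}(Y_j)\ge\delta_j^0$ coming from orthonormality---and replaces the explicit determinant estimate by Wongkew's tube bound for the degree-$k$ hypersurface $\{\det=0\}\subset\real^{k^2}$, which forces the exponent $q>p_0(k^2-1)+1$ (a $\delta_j^{\,k^2-1}$ loss). Both give polynomial decay, hence well-separating; your argument is shorter if one is willing to quote the tube estimate as a black box, while the paper's is self-contained, quantitatively tighter, and proves the stronger Banach-space statement that existence of one well-separating complement implies prevalence. Finally, your Borel-measurability sketch for $E$ (polynomially well-separating tuples) is essentially right but should be restricted to the open set of linearly independent tuples, as the paper does in \cref{lemmaBorelSet}; since $E$ sits inside the set of all well-separating tuples, prevalence of the latter then follows by monotonicity.
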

	
	\begin{conjecture}\label[conjecture]{conjectureMainBanachSpacePrevalence}
		Let $X$ be a Banach space and let $(V_j)_{j\in\natural}\subset\mathcal{G}^k(X)$. The set of all $(x_1,\dots,x_k)\in X^k$, such that $\textnormal{span}(x_1,\dots,x_k)$ is a well-separating common complement of $(V_j)_{j\in\natural}$, is prevalent.
	\end{conjecture}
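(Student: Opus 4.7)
The plan is to bootstrap from the existence part (\cref{conjectureMainBanachSpaceExistence}) to prevalence by using the given well-separating common complement $C_0 = \textnormal{span}(x^0_1,\ldots,x^0_k)$ as the seed of a finite-dimensional probe in $X^k$, in the spirit of \cref{propositionPrevalenceFiniteDim}. Set $P := C_0^k \subset X^k$, parametrized by matrices $T = (t_{ij}) \in \real^{k\times k}$ via $p_i(T) := \sum_j t_{ij} x^0_j$. By \cref{propositionProbe} it suffices to show: for every fixed $(y_1,\ldots,y_k) \in X^k$, Lebesgue-a.e.\ $T$ makes $\textnormal{span}(p_1(T)+y_1,\ldots,p_k(T)+y_k)$ a well-separating common complement of $(V_j)_{j\in\natural}$.

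First I would handle $k=1$, which is a direct one-dimensional analogue of \cref{propositionPrevalenceFiniteDim}. Choose $\varphi_j\in X'$ of norm $1$ with $\ker\varphi_j=V_j$ by Hahn-Banach; the well-separating hypothesis on $C_0=\textnormal{span}(x^0)$ then reads $|\varphi_j(x^0)|\geq\|x^0\|\delta_j^0$ for some subexponential $\delta_j^0$. For $y\in X$ and $\epsilon>0$, the bad set
\begin{equation*}
B_j^\epsilon := \bigl\{t\in\real\ :\ |t\varphi_j(x^0)+\varphi_j(y)| < \epsilon\|x^0\|\delta_j^0 j^{-2}\bigr\}
\end{equation*}
has Lebesgue measure at most $2\epsilon j^{-2}$, so $\sum_j |B_j^\epsilon|\leq\epsilon\pi^2/3\to 0$ as $\epsilon\to 0$. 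Hence for a.e.\ $t$ there is an $\epsilon>0$ simultaneously usable for all $j$, giving $|\varphi_j(tx^0+y)|\geq\epsilon\|x^0\|\delta_j^0 j^{-2}$; dividing by the $j$-independent constant $\|tx^0+y\|$ yields subexponential $\tilde{\delta}$-separation of $\textnormal{span}(tx^0+y)$.

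For $k\geq 2$ the template is the same but the transversality of $\textnormal{span}(z_i(T))$ with $V_j$, where $z_i(T):=p_i(T)+y_i$, is no longer affine in $T$. In the Hilbert space setting it equals a bounded normalization of the smallest singular value of the $k\times k$ matrix encoding $\pi_j z_i(T)\in H/V_j$ in an orthonormal basis; its determinant is a polynomial of degree $k$ in $T$, and a Remez-type inequality bounds the Lebesgue measure of $\{T:\text{transversality}<\tilde{\delta}_j\}$ by a fixed power of $\tilde{\delta}_j/\delta_j^0$. In general Banach spaces one first applies \cref{theoremBanachMazur} to the $k$-dimensional quotient $X/V_j$ to reduce to the euclidean case at multiplicative cost $\leq\sqrt{k}$. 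Choosing $\tilde{\delta}_j:=(\epsilon\delta_j^0 j^{-2})^{c_k}$ with a $k$-dependent exponent $c_k$ keeps the sum of bad-set measures $O(\epsilon^{c_k})\to 0$, while $\tilde{\delta}_j$ remains subexponential because subexponential decay is closed under fixed positive powers.

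The main obstacle is making the $k\geq 2$ Remez-type measure estimate uniform in $j$, since the coefficients of the underlying polynomial depend on both $C_0$ and the perturbation $(y_i)$ and must be tracked through the Banach-Mazur reduction. An alternative circumventing Remez is to induct on $k$ in parallel with the existence proof of \cref{theoremMainHilbertSpaceExistence}: adjoin a $1$-dimensional probe to a $(k-1)$-probe via the double-cone geometry of \cref{lemmaLine}, which already shows that merging two well-separating complements costs only the product of their transversality rates; subexponential decay survives this product, so Fubini applied to the product probe promotes the $(k-1)$-case to the $k$-case without ever leaving the one-dimensional measure estimate.
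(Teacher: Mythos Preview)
This statement is a \emph{conjecture} in the paper; what the paper actually proves is the conditional \cref{propositionPrevalence} (existence of one well-separating complement implies prevalence). Your proposal is likewise conditional on \cref{conjectureMainBanachSpaceExistence}, so the relevant comparison is with that proposition. Your architecture---probe $P=C_0^k\cong\real^{k\times k}$---and your $k=1$ argument agree with the paper.

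For $k\geq 2$ both of your routes leave real gaps. Route (a) is aimed at the right object (the determinant of the matrix of the $\pi_j z_i(T)$), but you flag and then do not resolve the uniformity-in-$j$ of the sublevel estimate. The paper closes it without Remez and without any Banach--Mazur reduction on $X/V_j$: decomposing each $y_i=c_{ij}'+v_{ij}'$ along $X=C_0\oplus V_j$ and reading off coordinates in the fixed basis $(b_l)$ of $C_0$, the matrix encoding $(\pi_j z_i(T))_i$ is simply $T+A_j$, where $A_j=I^k(c_{1j}',\dots,c_{kj}')$ satisfies $\|A_j\|_2\lesssim(\delta_j^0)^{-1}$. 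The crux is \cref{lemmaDeterminantLowerBoundGeneric}: $\mu\bigl(\{A\in B(0,1)^k:|\det(A+\tilde A)|\leq\eta\}\bigr)\leq c\,\eta$ with $c$ \emph{independent of $\tilde A$}, proved by iterated Fubini using that $\det(A+\tilde A)$ is affine in each column. Combined with the adjugate bound (\cref{lemmaInverseUpperBoundGeneric}) this gives $\|(T+A_j)^{-1}\|_2^{-1}\geq\epsilon j^{-2}(\tilde\delta_j)^{k-1}$ for a.e.\ $T$, and the transversality bound follows directly.

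Route (b) hides a dependence problem. In the existence proof the hyperplane family $V_j^2=V_j\oplus C_1$ is defined \emph{after} $C_1$ is fixed. In your prevalence induction, perturbing the first $k-1$ coordinates produces $\tilde C_1=\textnormal{span}(c_i+y_i)_{i<k}$, so the one-dimensional step must treat the moving family $(V_j\oplus\tilde C_1)_j$, which now depends on $(c_1,\dots,c_{k-1})$. Your fixed one-dimensional probe direction $C_2$ was only chosen well-separating for $(V_j\oplus C_1)_j$; nothing guarantees it remains well-separating for the moving family, so a straightforward Fubini on the product probe does not go through as written.
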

	
	We will show that the existence of one well-separating common complement implies prevalence of well-separating common complements. In particular, this proves \cref{theoremMainHilbertSpacePrevalence}. However, before starting with the proof we need a few elementary and technical lemmata.
	
	\begin{lemma}\label[lemma]{lemmaContinuous}
		Let $X$ be a Banach space and $U\subset X$ an open subset. If $f:U\times(\real^k\setminus\{0\})\to\real$ is continuous, then the mapping $g:U\to\real$ defined by 
		\begin{equation*}
			g(x):=\underset{\|\alpha\|_2=1}{\min}f(x,\alpha)
		\end{equation*}
		is continuous as well.
	\end{lemma}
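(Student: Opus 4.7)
My plan is to reduce this to the standard fact that the minimum of a jointly continuous function over a compact parameter set depends continuously on the free variable. The key point is that $S := \{\alpha \in \real^k : \|\alpha\|_2 = 1\}$ is compact and is contained in $\real^k \setminus \{0\}$, so continuity of $f$ at each $(x, \alpha)$ with $\alpha \in S$ is available and, by Weierstrass, the infimum defining $g(x)$ is actually attained. This justifies the use of $\min$ in the definition.

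Fix $x_0 \in U$. Since $U$ is open, I choose $\delta_0 > 0$ with $\overline{B}(x_0, \delta_0) \subset U$. The product $K := \overline{B}(x_0, \delta_0) \times S$ is then a compact subset of the open set $U \times (\real^k \setminus \{0\})$, so $f|_K$ is uniformly continuous. For given $\epsilon > 0$, this yields $\delta \in (0, \delta_0]$ such that $|f(x, \alpha) - f(x_0, \alpha)| < \epsilon$ whenever $\|x - x_0\| < \delta$ and $\alpha \in S$.

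With this uniform bound in hand, I compare minimizers. Pick $\alpha_x, \alpha_{x_0} \in S$ with $f(x, \alpha_x) = g(x)$ and $f(x_0, \alpha_{x_0}) = g(x_0)$. For $\|x - x_0\| < \delta$ I then estimate
\begin{equation*}
g(x) = f(x, \alpha_x) \leq f(x, \alpha_{x_0}) < f(x_0, \alpha_{x_0}) + \epsilon = g(x_0) + \epsilon,
\end{equation*}
and symmetrically
\begin{equation*}
g(x_0) \leq f(x_0, \alpha_x) < f(x, \alpha_x) + \epsilon = g(x) + \epsilon.
\end{equation*}
Combining gives $|g(x) - g(x_0)| < \epsilon$, proving continuity at $x_0$. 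Since $x_0 \in U$ was arbitrary, $g$ is continuous on $U$.

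I do not anticipate any real obstacle here: the only subtle point is ensuring that the compact set on which I invoke uniform continuity lies in the domain of $f$, which is why I first shrink to a closed ball inside $U$ and exploit that $S$ avoids the origin. Everything else is the classical two-sided comparison argument for parametric minima.
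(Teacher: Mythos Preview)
Your argument has a genuine gap at the step where you declare $K := \overline{B}(x_0,\delta_0)\times S$ compact. In an infinite-dimensional Banach space closed balls are never compact (Riesz's lemma), so $K$ is generally not compact and you cannot conclude that $f|_K$ is uniformly continuous. Without that uniform bound, the subsequent two-sided comparison of minimizers does not go through as written.

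The remedy is exactly what the paper does: exploit compactness of the set $\{x_0\}\times S$ rather than of $\overline{B}(x_0,\delta_0)\times S$. For each $\alpha\in S$ continuity of $f$ at $(x_0,\alpha)$ gives a radius $\delta_{(x_0,\alpha)}$ on which $f$ oscillates by less than $\epsilon$; finitely many of these neighborhoods cover $\{x_0\}\times S$, and the minimum of the corresponding radii yields a single $\delta>0$ with $|f(x,\alpha)-f(x_0,\alpha)|<\epsilon$ for all $\|x-x_0\|<\delta$ and all $\alpha\in S$. From there your comparison of minimizers is fine and coincides with the paper's final inequality. In short, the overall strategy is the same as the paper's; you just invoked compactness on the wrong set.
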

	\begin{proof}
		Let $\epsilon>0$ be given. For each $(x,\alpha)\in U\times(\real^k\setminus\{0\})$, we find $\delta_{(x,\alpha)}>0$ such that
		\begin{equation*}
			\|(x,\alpha)-(y,\beta)\|<\delta_{(x,\alpha)}\implies|f(x,\alpha)-f(y,\beta)|<\epsilon
		\end{equation*}
		for $(y,\beta)\in U\times(\real^k\setminus\{0\})$. Since the set $\{x\}\times\{\|\alpha\|_2=1\}$ is compact, it is covered by finitely many balls of radius $\delta_{(x,\alpha)}$ with $\alpha$ from $\{\|\alpha\|_2=1\}$. Thus, we find $\delta_x>0$ such that
		\begin{equation*}
			\|(x,\alpha)-(y,\beta)\|<\delta_x\implies|f(x,\alpha)-f(y,\beta)|<\epsilon
		\end{equation*}
		for $(y,\beta)\in U\times(\real^k\setminus\{0\})$ if $\|\alpha\|_2=1$. Now, if $\|x-y\|<\delta_x$, then
		\begin{equation*}
			g(x)\leq\underset{\|\alpha\|_2=1}{\min}(f(y,\alpha)+|f(x,\alpha)-f(y,\alpha)|)\leq g(y)+\epsilon.
		\end{equation*}
	\end{proof}
	
	\begin{lemma}\label[lemma]{lemmaBorelSet}
		The set of all tuples spanning well-separating common complements of $(V_j)_{j\in\natural}$ is a Borel subset of $X^k$.
	\end{lemma}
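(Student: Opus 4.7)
My approach is to reduce Borel-ness of the set $W$ of tuples spanning well-separating common complements to standard Boolean operations on continuous functions. Let $U\subset X^k$ denote the set of linearly independent $k$-tuples, and for $(x_1,\ldots,x_k)\in U$ define
\begin{equation*}
  h_j(x_1,\ldots,x_k) := \min_{\|\alpha\|_2 = 1} \frac{\textnormal{d}\left(\sum_{i=1}^k \alpha_i x_i,\, V_j\right)}{\left\|\sum_{i=1}^k \alpha_i x_i\right\|}.
\end{equation*}
By positive homogeneity, $h_j(x_1,\ldots,x_k)$ equals the degree of transversality of $\textnormal{span}(x_1,\ldots,x_k)$ with $V_j$. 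I will first verify the characterization: $(x_1,\ldots,x_k)$ spans a well-separating common complement iff it lies in $U$, $h_j > 0$ for every $j$, and $\lim_{j\to\infty}\frac{1}{j}\log h_j = 0$. The reverse implication uses $\delta_j := h_j$ as a witness; the forward implication uses $\frac{1}{j}\log h_j \geq \frac{1}{j}\log \delta_j$ together with $h_j\leq 1$ (since $0\in V_j$) to force the limit to vanish from both sides.

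Next I will establish the topological properties of $U$ and $h_j$ via \cref{lemmaContinuous}. The map $(x_1,\ldots,x_k,\alpha)\mapsto \|\sum_i \alpha_i x_i\|$ is jointly continuous on $X^k\times(\real^k\setminus\{0\})$, so \cref{lemmaContinuous} yields continuity of $g(x_1,\ldots,x_k) := \min_{\|\alpha\|_2=1}\|\sum_i \alpha_i x_i\|$; hence $U = g^{-1}((0,\infty))$ is open in $X^k$. On $U\times(\real^k\setminus\{0\})$ the denominator in the definition of $h_j$ is strictly positive, so the quotient is jointly continuous there, and another application of \cref{lemmaContinuous} gives continuity of $h_j : U \to [0,1]$.

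Finally I will translate the limit condition into a countable Boolean combination. Because $\log h_j \leq 0$, the requirement $\frac{1}{j}\log h_j \to 0$ is equivalent to $\liminf_{j\to\infty}\frac{1}{j}\log h_j \geq 0$, which reads: for every $m\in\natural$ there exists $N$ such that $h_j \geq e^{-j/m}$ for all $j\geq N$. Consequently,
\begin{equation*}
  W \;=\; U \,\cap\, \bigcap_{j\in\natural}\{h_j > 0\} \,\cap\, \bigcap_{m\in\natural}\bigcup_{N\in\natural}\bigcap_{j\geq N}\{h_j \geq e^{-j/m}\}.
\end{equation*}
By continuity of $h_j$ on the open set $U$, each $\{h_j > 0\}$ is open in $U$ and each $\{h_j \geq e^{-j/m}\}$ is closed in $U$, so both are Borel in $X^k$; since Borel sets are closed under countable unions and intersections, $W$ is Borel.

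The main obstacle is bookkeeping in the first paragraph rather than a genuine technical difficulty: one has to translate the existentially quantified definition of \emph{well-separating} (some $\delta$ with subexponential decay bounding $h_j$ from below) into a direct statement about the sequence $(h_j)$ itself, and then package the resulting limit as a countable union/intersection of sublevel-type sets. Everything else is routine topology combined with \cref{lemmaContinuous}.
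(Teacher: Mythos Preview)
Your proof is correct and follows essentially the same route as the paper: define the transversality quantity $h_j$ (the paper's $g_j$) via the quotient-map distance, use \cref{lemmaContinuous} twice to get that $U$ is open and each $h_j$ is continuous on $U$, reduce well-separating to the condition $\lim_j j^{-1}\log h_j=0$, and unravel this limit as a countable Boolean combination of (relatively) open or closed sets. Your version is slightly more careful in two places—the explicit justification of the characterization via $\delta_j:=h_j$, and the separate inclusion of $\bigcap_j\{h_j>0\}$ to handle finitely many small indices—but the argument is the same.
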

	\begin{proof}
		First, define the map $s:X^k\to\real$ by
		\begin{equation*}
			s(c):=\underset{\|\alpha\|_2=1}{\min}\,\left\|\sum_{i=1}^{k}\alpha_ic_i\right\|.
		\end{equation*}
		With the help of \cref{lemmaContinuous} it is easily seen that $s$ is continuous. In particular, the set $U:=s^{-1}(0,\infty)$ of all linearly independent tuples is open in $X^k$. Next, let $\pi_j:X\to X/V_j$ be the quotient map associated to $V_j$. We apply \cref{lemmaContinuous} again to see that the maps $g_j:U\to\real$ given by 
		\begin{equation*}
			g_j(c):=\underset{\|\alpha\|_2=1}{\min}\frac{\|\sum_{i=1}^{k}\alpha_i\pi_jc_i\|}{\|\sum_{i=1}^{k}\alpha_ic_i\|}
		\end{equation*}
		are continuous. Slightly rewriting $g_j$ reveals that
		\begin{equation*}
			g_j(c)=\underset{x\in\textnormal{span}(c_1,\dots,c_k),\ \|x\|=1}{\inf}\,\textnormal{d}(x,V_j)
		\end{equation*}
		has the form as in \cref{definitionWellSeparatingComplement}. In particular, $\textnormal{span}(c_1,\dots,c_k)$ is a well-separating common complement if and only if $c\in U$, $g_j(c)>0$, and 
		\begin{equation*}
			\lim_{j\to\infty}\frac{1}{j}\log g_j(c)=0.
		\end{equation*}
		Let $f_j:U_j\to\real$ be given by $f_j(c):=j^{-1}\log g_j(c)$, where $U_j:=g_j^{-1}(0,\infty)\subset X^k$ is open. Then, $f_j$ is continuous and bounded from above by zero. Finally, the set of tuples spanning well-separating common complements can be expressed as
		\begin{equation*}
			\bigcap_{l>0}\bigcup_{J\in\natural}\bigcap_{j\geq J}\left\{c\in U_j\ \bigg|\ f_j(c)> -\frac{1}{l}\right\},
		\end{equation*}
		which is a Borel set.
	\end{proof}

	\begin{lemma}\label[lemma]{lemmaDeterminantLowerBoundGeneric}
		Let $(A_j)_{j\in\natural}\subset\real^{k\times k}$ be some matrices. For almost all $A\in\real^{k\times k}$, there exists $\epsilon>0$ s.t.
		\begin{equation*}
			\forall j\in\natural\ :\hspace{1em}|\det(A+A_j)|\geq\frac{\epsilon}{j^2}.
		\end{equation*}
	\end{lemma}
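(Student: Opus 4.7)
Since $\real^{k\times k}$ is finite-dimensional, by \cref{propositionPrevalenceAxioms}(5) ``almost all'' coincides with Lebesgue almost everywhere, so it suffices to show that the bad set
\begin{equation*}
    \mathcal{N}=\left\{A\in\real^{k\times k}\ \middle|\ \forall\epsilon>0,\ \exists j\in\natural:\ |\det(A+A_j)|<\frac{\epsilon}{j^2}\right\}
\end{equation*}
has Lebesgue measure zero. Writing $\mathcal{N}=\bigcap_{n\in\natural}E_{1/n}$ with $E_{\epsilon}:=\bigcup_{j\in\natural}\{A:|\det(A+A_j)|<\epsilon/j^2\}$, and noting that the sets $E_{1/n}$ are decreasing, it is enough to prove that $\mu(E_\epsilon\cap Q)\to 0$ as $\epsilon\to 0$ for every cube $Q=[-R,R]^{k\times k}$. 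By subadditivity,
\begin{equation*}
    \mu(E_\epsilon\cap Q)\leq\sum_{j=1}^{\infty}\mu\left(\left\{A\in Q:\ |\det(A+A_j)|<\epsilon/j^2\right\}\right),
\end{equation*}
so the task reduces to a \emph{uniform} (in $A_j$) bound on the individual summands.

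The main technical step is to establish, by induction on $k$, the estimate
\begin{equation*}
    \mu\left(\left\{A\in Q:\ |\det(A+A_j)|<\delta\right\}\right)\leq C(R,k)\,\delta\,|\log\delta|^{k-1}
\end{equation*}
for all $\delta\in(0,1)$ and all $A_j\in\real^{k\times k}$, with a constant $C(R,k)$ independent of $A_j$. For $k=1$ the left-hand side is at most $\min(2R,2\delta)\leq 2\delta$. For the inductive step, after the translation $B=A+A_j$ the region becomes a translate of $Q$, so I may Fubini over the matrix entries of $B$. Expanding $\det B$ along the first row gives $\det B=\sum_{i=1}^{k}(-1)^{i+1}B_{1i}\,c_i$, where $c_i$ is the $(1,i)$-cofactor that depends only on the lower $k-1$ rows. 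For fixed lower rows with $\|c\|_\infty=\eta>0$, integration over the first row (a translate of $[-R,R]^k$) yields at most $(2R)^{k-1}\min(2R,2\delta/\eta)$. A layer-cake argument in $\eta$ then reduces the problem to bounding $\mu(\{\eta<t\})$ in the lower rows. Since $\eta\geq|c_1|$ is a lower bound and $c_1$ is itself the determinant of a translated $(k-1)\times(k-1)$ block, the inductive hypothesis gives $\mu(\{|c_1|<t\})\leq C(R,k-1)\,t\,|\log t|^{k-2}$ (times the $(k-1)$-dimensional measure of the free column that does not enter $c_1$). Substituting and evaluating the resulting $\int_0^{2R}t(s)|\log t(s)|^{k-2}\frac{ds}{s}$ with $t(s)=2\delta/s$ produces the claimed $\delta|\log\delta|^{k-1}$ behaviour.

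Once this uniform bound is in hand, we conclude via
\begin{equation*}
    \mu(E_\epsilon\cap Q)\leq C(R,k)\sum_{j=1}^{\infty}\frac{\epsilon}{j^2}\left|\log\frac{\epsilon}{j^2}\right|^{k-1}\leq C'(R,k)\,\epsilon\left(|\log\epsilon|+1\right)^{k-1}\underset{\epsilon\to 0}{\longrightarrow}0,
\end{equation*}
using $\sum_j j^{-2}(\log j)^{k-1}<\infty$ to absorb the $j$-dependence inside the logarithm. Hence $\mu(\mathcal{N}\cap Q)=0$ for every $R$, so $\mathcal{N}$ is a Lebesgue nullset and the claim follows. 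The principal obstacle is the uniform estimate in the second paragraph: because $A_j$ can have arbitrarily large norm, one cannot simply apply a global Remez-type inequality on a fixed region; the translation invariance of Lebesgue measure must be exploited so that the Fubini bounds depend only on the \emph{size} of the cube, not on its position, and the induction on $k$ has to track the polylogarithmic loss carefully.
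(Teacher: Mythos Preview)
Your outline is correct and would fill in to a valid proof, but it follows a genuinely different route from the paper. Both arguments reduce to a uniform sublevel--set estimate
\[
\mu\bigl(\{A\in Q:\ |\det(A+\tilde A)|\leq\eta\}\bigr)\leq C(Q,k)\,\phi(\eta)
\]
with $C$ independent of $\tilde A$, and then conclude by summing $\phi(\epsilon j^{-2})$ over $j$. The difference is in how the determinant is unpacked and, consequently, in the shape of $\phi$.

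You expand $\det(A+\tilde A)$ along the first row, bound the slab in the first-row variables by $\min(2R,2\delta/|c_1|)$ using a single cofactor, and then apply the inductive hypothesis to the $(k-1)\times(k-1)$ minor $c_1$ via a layer--cake argument. This produces the polylogarithmic loss $\phi(\eta)=\eta|\log\eta|^{k-1}$, which is harmless for the final summation but must be tracked through the induction. The paper instead writes $|\det B|$ as the product of Gram--Schmidt distances,
\[
|\det B|=\|b_1\|\,\|\Pi_{\textnormal{span}(b_1)^\perp}b_2\|\cdots\|\Pi_{\textnormal{span}(b_1,\dots,b_{k-1})^\perp}b_k\|,
\]
integrates the last column first to get a factor $\eta\,t_{k-1}^{-1}$, and then shows $\int t_{k-1}^{-1}<\infty$ by peeling off columns one at a time. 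The point is that each peeling step requires only the integrability of $\|x\|^{-1}$ on a ball in $\real^{j+1}$ with $j\geq 1$, which is finite without any logarithm; hence the paper obtains the sharp linear bound $\phi(\eta)=c\,\eta$.

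So: your cofactor approach is more elementary (no orthogonal projections, no Gram--Schmidt), while the paper's QR-type decomposition buys a cleaner estimate with no $|\log\eta|$ factors. For the present lemma either suffices, but the linear bound feeds slightly better constants into \cref{lemmaInverseUpperBoundGeneric} and \cref{remarkPrevalenceDegree}.
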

	\begin{proof}
		Let $M>0$ and $\tilde{A}_j:=M^{-1}A_j$. Assume the claim holds for almost all $\tilde{A}\in B(0,1)^k$ w.r.t $(\tilde{A}_j)_{j\in\natural}$. Setting $A:=M\tilde{A}$ for any such $\tilde{A}$ yields
		\begin{equation*}
			|\det(A+A_j)|=M^k|\det(\tilde{A}+\tilde{A}_j)|\geq M^k\frac{\tilde{\epsilon}}{j^2}
		\end{equation*}
		for some $\tilde{\epsilon}>0$. In particular, almost all $A\in B(0,M)^k$ fulfill the required estimate w.r.t. $(A_j)_{j\in\natural}$. Exhausting $\real^{k\times k}$ with $B(0,M)^k$, $M>0$, implies that the claim holds for almost all $A\in\real^{k\times k}$. Thus, it remains to prove that the claim holds for almost all $A\in B(0,1)^k$.\par 
		For $A=(a_1,\dots,a_k)$ it holds
		\begin{align*}
			|\det A|&=|\det (a_1,\Pi_{\textnormal{span}(a_1)^{\perp}}a_2,\dots,\Pi_{\textnormal{span}(a_1,\dots,a_{k-1})^{\perp}}a_k)|\\
			&=\textnormal{vol}_k(\square(a_1,\Pi_{\textnormal{span}(a_1)^{\perp}}a_2,\dots,\Pi_{\textnormal{span}(a_1,\dots,a_{k-1})^{\perp}}a_k))\\
			&=\|a_1\|_2\|\Pi_{\textnormal{span}(a_1)^{\perp}}a_2\|_2\dots\|\Pi_{\textnormal{span}(a_1,\dots,a_{k-1})^{\perp}}a_k\|_2,
		\end{align*}
		where $\Pi_V$ denotes the orthogonal projection onto a subspace $V\subset\real^k$ and $\square(v_1,\dots,v_k)\subset\real^k$ denotes the parallelepiped spanned by vectors $v_1,\dots,v_k\in\real^k$. Using this representation, we will derive an estimate of the form
		\begin{equation}\label{equationMeasureEstimate}
			\mu(\{A\in B(0,1)^k\ :\ |\det(A+\tilde{A})|\leq\eta\}\leq c\eta
		\end{equation}
		for all $\eta>0$ independent of $\tilde{A}$, where $c>0$ is a constant only depending on $k$. To this end fix $\tilde{A}=(\tilde{a}_1,\dots,\tilde{a}_k)$ and define
		\begin{align*}
			t_j(a_1,\dots,a_j)&:=\|a_1+\tilde{a}_1\|_2\|\Pi_{\textnormal{span}(a_1+\tilde{a}_1)^{\perp}}(a_2+\tilde{a}_2)\|_2\\
			&\hspace{2em}\dots\|\Pi_{\textnormal{span}(a_1+\tilde{a_1},\dots,a_{j-1}+\tilde{a}_{j-1})^{\perp}}(a_j+\tilde{a}_j)\|_2
		\end{align*}
		for $j=1,\dots,k$. Set $t_0:=1$. To arrive at an estimate as in \cref{equationMeasureEstimate} we split the integral
		\begin{equation*}
			\int_{B(0,1)^k}\chi_{\{A\ :\ |\det(A+\tilde{A})|\leq\eta\}}(A)\,dA
		\end{equation*}
		using Fubini's theorem. The inner integral becomes
		\begin{equation*}
			I:=\int_{B(0,1)}\chi_{\{a_k\ :\ \|\Pi_{\textnormal{span}(a_1+\tilde{a}_1,\dots,a_{k-1}+\tilde{a}_{k-1})^{\perp}}(a_k+\tilde{a}_k)\|_2\leq\eta t_{k-1}^{-1}\}}(a_k)\,da_k,
		\end{equation*}
		where $t_{k-1}^{-1}$ depends on $a_1,\dots,a_{k-1}$ and might be $\infty$. If it is $\infty$, then the inner integral is $\textnormal{vol}_k(B(0,1))$. In the other case $a_1+\tilde{a}_1,\dots,a_{k-1}+\tilde{a}_{k-1}$ must be linearly independent. Hence, their linear span is of dimension $k-1$ and we find a rotation $T$ that maps $e_1,\dots,e_{k-1}$ into their span and maps $e_k$ into the orthogonal complement. After applying the transformation, we have
		\begin{equation*}
			I=\int_{B(0,1)}\chi_{\{b_k\ :\ \|\Pi_{\textnormal{span}(a_1+\tilde{a}_1,\dots,a_{k-1}+\tilde{a}_{k-1})^{\perp}}(Tb_k+\tilde{a}_k)\|_2\leq\eta t_{k-1}^{-1}\}}(b_k)\,db_k.
		\end{equation*}
		Writing $b_k=(\beta_{1k},\dots,\beta_{kk})^T$ and $\tilde{b}_k=(\tilde{\beta}_{1k},\dots,\tilde{\beta}_{kk})^T$ for $\tilde{b}_k:=T^{-1}\tilde{a}_k$, we get
		\begin{align*}
			I&=\int_{B(0,1)}\chi_{\{b_k\ :\ |\beta_{kk}+\tilde{\beta}_{kk}|\leq\eta t_{k-1}^{-1}\}}(b_k)\,db_k\\
			&\leq 2^{k-1}\int_{-1}^{1}\chi_{\{\beta_{kk}\ :\ |\beta_{kk}+\tilde{\beta}_{kk}|\leq\eta t_{k-1}^{-1}\}}(\beta_{kk})\,d\beta_{kk}\\
			&\leq 2^{k-1}\int_{-1}^{1}\chi_{\{\beta_{kk}\ :\ |\beta_{kk}|\leq\eta t_{k-1}^{-1}\}}(\beta_{kk})\,d\beta_{kk}\\
			&= 2^{k}\min(1,\eta t_{k-1}^{-1})\\
			&\leq 2^{k}\eta t_{k-1}^{-1}.
		\end{align*}
		For the first inequality we embedded $B(0,1)$ into $[-1,1]^k$. Now, we have an estimate on $I$ depending on $a_1,\dots,a_{k-1}$ that also holds when $t_{k-1}^{-1}=\infty$. In the following we show that
		\begin{equation}\label{equationEstimateT}
			\int_{B(0,1)^{k-1}}t_{k-1}^{-1}\,d(a_1,\dots,a_{k-1})\leq c'
		\end{equation}
		for some constant $c'$ by proving that
		\begin{equation}\label{equationInductionMeasure}
			\int_{B(0,1)}t_{k-j}^{-1}\,da_{k-j}\leq c_j't_{k-(j+1)}^{-1}
		\end{equation}
		for some constants $c_j'$ for $j=1,\dots,k-1$. Ultimately, it follows that we can set $c':=c_1'\dots c_{k-1}'$ and $c:=2^kc'$ to reach the desired estimate in \cref{equationMeasureEstimate}. So, let us prove the above inductive formula \cref{equationInductionMeasure}. We write
		\begin{align*}
			&\int_{B(0,1)}t_{k-j}^{-1}\,da_{k-j}\\
			&\hspace{1em}=t_{k-(j+1)}^{-1}\int_{B(0,1)}\|\Pi_{\textnormal{span}(a_1+\tilde{a}_1,\dots,a_{k-(j+1)}+\tilde{a}_{k-(j+1)})^{\perp}}(a_{k-j}+\tilde{a}_{k-j})\|_2^{-1}\,da_{k-j}.
		\end{align*}
		As before, we distinguish between the cases $t_{k-(j+1)}^{-1}=\infty$ and $t_{k-(j+1)}^{-1}<\infty$. In the first case, the inductive formula \cref{equationInductionMeasure} is obviously satisfied. In the second case, we again apply a transformation $T$ which rotates the first $k-(j+1)$ vectors of the standard basis to $\textnormal{span}(a_1+\tilde{a}_1,\dots,a_{k-(j+1)}+\tilde{a}_{k-(j+1)})$ and the remaining basis vectors to its orthogonal complement. Similar to before, writing $b_{k-j}=(\beta_{1(k-j)},\dots,\beta_{k(k-j)})^T$ and $\tilde{b}_{k-j}=(\tilde{\beta}_{1(k-j)},\dots,\tilde{\beta}_{k(k-j)})^T$ for $\tilde{b}_{k-j}:=T^{-1}\tilde{a}_{k-j}$, we get
		\begin{align*}
			&\int_{B(0,1)}\|\Pi_{\textnormal{span}(a_1+\tilde{a}_1,\dots,a_{k-(j+1)}+\tilde{a}_{k-(j+1)})^{\perp}}(a_{k-j}+\tilde{a}_{k-j})\|_2^{-1}\,da_{k-j}\\&\hspace{1em}=\int_{B(0,1)}\|\Pi_{\textnormal{span}(a_1+\tilde{a}_1,\dots,a_{k-(j+1)}+\tilde{a}_{k-(j+1)})^{\perp}}(Tb_{k-j}+\tilde{a}_{k-j})\|_2^{-1}\,db_{k-j}\\
			&\hspace{1em}=\int_{B(0,1)}\|(\beta_{(k-j)(k-j)}+\tilde{\beta}_{(k-j)(k-j)},\dots,\beta_{k(k-j)}+\tilde{\beta}_{k(k-j)})^T\|_2^{-1}\,db_{k-j}.
		\end{align*}
		Let $\beta_{k-j}:=(\beta_{(k-j)(k-j)},\dots,\beta_{k(k-j)})^T$ and $\tilde{\beta}_{k-j}:=(\tilde{\beta}_{(k-j)(k-j)},\dots,\tilde{\beta}_{k(k-j)})^T$. Embedding $B(0,1)\subset\real^k$ into $[-1,1]^{k-(j+1)}\times B(0,1)\subset\real^{k-(j+1)}\times\real^{j+1}$ shows that the above integral can be estimated by
		\begin{align*}
			&2^{k-(j+1)}\int_{B(0,1)}\|\beta_{k-j}+\tilde{\beta}_{k-j}\|_2^{-1}\,d\beta_{k-j}\\
			&\hspace{1em}\leq 2^{k-(j+1)}\int_{B(0,1)}\|\beta_{k-j}\|_2^{-1}\,d\beta_{k-j}=:c_j'<\infty.
		\end{align*}
		Tracing back the steps, this concludes the proof of \cref{equationInductionMeasure}, which in turn gives us \cref{equationEstimateT} and \cref{equationMeasureEstimate}. Having \cref{equationMeasureEstimate}, we set $\eta:=\epsilon j^{-2}$ and $\tilde{A}:=A_j$. It holds
		\begin{align*}
			&\mu(\{A\in B(0,1)^k\ |\ \exists j:\ |\det (A+A_j)|\leq \epsilon j^{-2}\})\\
			&\hspace{1em}\leq\sum_{j=1}^{\infty}\mu(\{A\in B(0,1)^k\ :\ |\det (A+A_j)|\leq \epsilon j^{-2}\})\\
			&\hspace{1em}\leq\sum_{j=1}^{\infty}c\epsilon j^{-2}\\
			&\hspace{1em}=\epsilon \frac{c\pi^2}{6}\\
			&\hspace{1em}\underset{\epsilon\to 0}{\rightarrow}0.
		\end{align*}
		Hence, for almost all $A\in B(0,1)^k$ there is $\epsilon>0$ such that for all $j\in\natural$ we have $|\det(A+A_j)|\geq \epsilon j^{-2}$.
	\end{proof}
	
	\begin{lemma}\label[lemma]{lemmaInverseUpperBoundGeneric}
		Let $(A_j)_{j\in\natural}\subset\real^{k\times k}$ be matrices s.t. $\|A_j\|_2\leq\delta_j^{-1}$ with $0<\delta_j\leq 1$. Then, for almost all $A\in\real^{k\times k}$ there is $\epsilon>0$ s.t.
		\begin{equation*}
			\forall j\in\natural\ :\hspace{1em}\|(A+A_j)^{-1}\|_2^{-1}\geq \epsilon j^{-2}\delta_j^{k-1}.
		\end{equation*}
	\end{lemma}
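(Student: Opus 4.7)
The plan is to deduce the bound from the previous lemma via the elementary identity relating the smallest singular value, the determinant and the operator norm of a $k \times k$ matrix. For any invertible matrix $M \in \real^{k\times k}$, the singular values $\sigma_1(M) \geq \dots \geq \sigma_k(M) > 0$ satisfy $\|M\|_2 = \sigma_1(M)$, $\|M^{-1}\|_2^{-1} = \sigma_k(M)$, and $|\det M| = \sigma_1(M)\cdots\sigma_k(M)$. Combining these gives
\begin{equation*}
\|M^{-1}\|_2^{-1} = \frac{|\det M|}{\sigma_1(M)\cdots\sigma_{k-1}(M)} \geq \frac{|\det M|}{\|M\|_2^{k-1}}.
\end{equation*}
I would apply this with $M = A + A_j$.

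To control the numerator, I would invoke \cref{lemmaDeterminantLowerBoundGeneric} applied to $(A_j)_{j\in\natural}$: for almost every $A \in \real^{k \times k}$ there exists $\epsilon' > 0$ such that $|\det(A+A_j)| \geq \epsilon' j^{-2}$ for every $j$. To control the denominator, I would simply estimate by the triangle inequality
\begin{equation*}
\|A + A_j\|_2 \leq \|A\|_2 + \|A_j\|_2 \leq \|A\|_2 + \delta_j^{-1} \leq (\|A\|_2 + 1)\delta_j^{-1},
\end{equation*}
where the last inequality uses $\delta_j \leq 1$, hence $\delta_j^{-1} \geq 1$.

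Combining these two bounds yields, for almost every $A$ and every $j \in \natural$,
\begin{equation*}
\|(A+A_j)^{-1}\|_2^{-1} \geq \frac{\epsilon' j^{-2}}{(\|A\|_2 + 1)^{k-1}\delta_j^{-(k-1)}} = \frac{\epsilon'}{(\|A\|_2+1)^{k-1}} \cdot j^{-2} \delta_j^{k-1},
\end{equation*}
so setting $\epsilon := \epsilon' (\|A\|_2 + 1)^{-(k-1)}$ gives the desired estimate. There is no real obstacle here; the only substantive content is the singular-value identity, and the genericity is fully absorbed by \cref{lemmaDeterminantLowerBoundGeneric}. The upper bound on $\|A+A_j\|_2$ is deterministic because the hypothesis $\|A_j\|_2 \leq \delta_j^{-1}$ is uniform.
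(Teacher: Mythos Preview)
Your proof is correct and follows essentially the same approach as the paper: invoke \cref{lemmaDeterminantLowerBoundGeneric} for the determinant and control the remaining factor via $\|A+A_j\|_2^{k-1}$ using the triangle inequality and $\delta_j\leq 1$. The only cosmetic difference is that the paper reaches the inequality $\|M^{-1}\|_2^{-1}\geq |\det M|\,/\,\|M\|_2^{k-1}$ via the adjugate formula together with Hadamard's inequality and a max-norm/spectral-norm comparison, whereas your singular-value identity gives it in one line; the resulting constants and the structure of the argument are otherwise identical.
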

	\begin{proof}
		Let $A$ be as in \cref{lemmaDeterminantLowerBoundGeneric}. Using the adjugate, we write
		\begin{equation*}
			(A+A_j)^{-1}=\det(A+A_j)^{-1}(A+A_j)^{\textnormal{ad}}.
		\end{equation*}
		Hence, we have
		\begin{equation*}
			\|(A+A_j)^{-1}\|_2^{-1}=|\det(A+A_j)|\,\|(A+A_j)^{\textnormal{ad}}\|_2^{-1}.
		\end{equation*}
		According to \cref{lemmaDeterminantLowerBoundGeneric} the determinant part can be estimated from below by $\tilde{\epsilon}j^{-2}$. For the adjugate part, we remark that the spectral norm and the max norm on $\real^{k\times k}$ are equivalent. Thus, there are constants $c_1,c_2>0$ with $c_1\|.\|_{\max}\leq\|.\|_2\leq c_2\|.\|_{\max}$. Moreover, the entries of the adjugate consist of determinants of $(k-1)\times(k-1)$-matrices with entries from $A+A_j$. As a simple corollary of Hadamard's inequality, we can estimate those determinants using the max norm to obtain
		\begin{align*}
			\|(A+A_j)^{\textnormal{ad}}\|_2&\leq c_2\|(A+A_j)^{\textnormal{ad}}\|_{\max}\\
			&\leq c_2 \|A+A_j\|_{\max}^{k-1}(k-1)^{\frac{k-1}{2}}\\
			&\leq c_2 (k-1)^{\frac{k-1}{2}}c_1^{-(k-1)}\|A+A_j\|_2^{k-1}\\
			&\leq c_2 (k-1)^{\frac{k-1}{2}}c_1^{-(k-1)}(\|A\|_2+\|A_j\|_2)^{k-1}\\
			&\leq c_2 (k-1)^{\frac{k-1}{2}}c_1^{-(k-1)}(\|A\|_2+\delta_j^{-1})^{k-1}\\
			&\leq c_2(k-1)^{\frac{k-1}{2}} c_1^{-(k-1)}(\|A\|_2+1)^{k-1}\delta_j^{-(k-1)}\\
			&=:c\delta_j^{-(k-1)}.
		\end{align*}
		Now, we set $\epsilon:=\tilde{\epsilon}c^{-1}$ to obtain the result.
	\end{proof}

	\begin{proposition}\label[proposition]{propositionPrevalence}
		Assume there exists a well-separating common complement of $(V_j)_{j\in\natural}\subset\mathcal{G}^k(X)$. Then, the set of all $(x_1,\dots,x_k)\in X^k$, such that $\textnormal{span}(x_1,\dots,x_k)$ is a well-separating common complement of $(V_j)_{j\in\natural}$, is prevalent.
	\end{proposition}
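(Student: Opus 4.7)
The plan is to exhibit an explicit probe in the sense of \cref{definitionProbe} and invoke \cref{propositionProbe}. By hypothesis there exists a well-separating common complement $C\in\mathcal{G}_k(X)$ of $(V_j)_{j\in\natural}$ with separation sequence $(\delta_j^C)_{j\in\natural}$. Fix a basis $c_1,\dots,c_k$ of $C$ and take as probe the $k^2$-dimensional subspace $P:=C^k\subset X^k$, identified with $\real^{k\times k}$ via $A\mapsto\bigl(\sum_l A_{l1}c_l,\dots,\sum_l A_{lk}c_l\bigr)$ and equipped with the induced Lebesgue measure $\lambda_P$. Borel measurability of the set of tuples spanning a well-separating common complement is already handled by \cref{lemmaBorelSet}, so it remains to show that for an arbitrary translate $y=(y_1,\dots,y_k)\in X^k$ and Lebesgue-almost every $A$ the tuple $x_i:=y_i+\sum_l A_{li}c_l$ spans such a complement.

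To verify this I work in the quotients $X/V_j$ via the quotient maps $\pi_j:X\to X/V_j$. Since $C$ complements each $V_j$, the images $\pi_jc_1,\dots,\pi_jc_k$ form a basis of $X/V_j$; expanding $\pi_jy_i=\sum_l M_{li}^j\pi_jc_l$ defines a matrix $M^j\in\real^{k\times k}$, and the coordinate columns of $\pi_jx_1,\dots,\pi_jx_k$ in this basis are exactly those of $B^j:=M^j+A$. The well-separation inequality $\|\pi_jc\|\geq\delta_j^C\|c\|$ for $c\in C$, combined with norm-equivalence on the finite-dimensional space $C$, yields a uniform bound $\|M^j\|_2\leq K'/\delta_j^C$ where $K'$ depends only on $y$ and the chosen basis. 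This is precisely the input needed for \cref{lemmaInverseUpperBoundGeneric}, which then supplies, for almost every $A$, some $\epsilon>0$ with $\|(B^j)^{-1}\|_2^{-1}\geq\epsilon j^{-2}(\delta_j^C/K')^{k-1}$ for all $j\in\natural$.

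It remains to translate this matrix-level estimate into a lower bound on the degree of transversality. For a unit vector $x=\sum_i\alpha_ix_i$ in $\textnormal{span}(x_1,\dots,x_k)$ the identity $\pi_jx=\sum_l(B^j\alpha)_l\pi_jc_l$, combined with the well-separation of $C$ and with $\|B^j\alpha\|_2\geq\|\alpha\|_2/\|(B^j)^{-1}\|_2$, yields $\|\pi_jx\|\geq c_1\delta_j^C\|\alpha\|_2/\|(B^j)^{-1}\|_2$; pairing this with an upper bound $\|x\|\leq c_2\|\alpha\|_2$ shows that $\textnormal{d}(x,V_j)$ is at least a constant multiple of $j^{-2}(\delta_j^C)^k$, which is still subexponentially decaying. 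Prevalence then follows from \cref{propositionProbe}.

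The main obstacle I anticipate is the bookkeeping in this last paragraph: keeping careful track of the norm-equivalence constants on the finite-dimensional space $C$, verifying that all constants remain genuinely independent of $j$, arranging (by rescaling if necessary) that the hypothesis $\|M^j\|_2\leq\delta_j^{-1}$ with $\delta_j\in(0,1]$ of \cref{lemmaInverseUpperBoundGeneric} is met without destroying subexponential control, and assembling the pieces so that the resulting rate for $\textnormal{d}(x,V_j)$ provably inherits subexponential decay from $(\delta_j^C)$.
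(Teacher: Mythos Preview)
Your proposal is correct and follows essentially the same route as the paper's proof: both take $P=C^k$ as probe, decompose a given translate $y_i$ along $X=C\oplus V_j$ to produce a matrix $M^j$ (the paper calls it $A_j$), bound $\|M^j\|_2$ by a constant times $(\delta_j^C)^{-1}$ via the well-separation of $C$, invoke \cref{lemmaInverseUpperBoundGeneric}, and then read off a lower bound on $\textnormal{d}(x,V_j)$ of the form $\text{const}\cdot j^{-2}(\delta_j^C)^k$. The only difference is cosmetic: you phrase the decomposition in the quotient $X/V_j$ using the basis $\pi_jc_1,\dots,\pi_jc_k$, whereas the paper works directly with the $C$-component in $X=C\oplus V_j$; these are the same data via the isomorphism $\pi_j|_C$. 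The bookkeeping points you flag (rescaling so that the hypothesis $\delta_j\in(0,1]$ of \cref{lemmaInverseUpperBoundGeneric} is met, and the $j$-independent constant $c_2$ coming from the fixed tuple $(x_1,\dots,x_k)$) are exactly the ones the paper handles with its $\tilde{\delta}_j$ and $\eta$.
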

	\begin{proof}
		Let $C$ be a $\delta$-well-separating common complement of $(V_j)_{j\in\natural}$. To prove prevalence, we show that the set
		\begin{align*}
			&\big\{(c_1,\dots,c_k)\in C^k\ \big|\ \textnormal{span}(c_1+x_1,\dots,c_k+x_k)\\
			&\hspace{2em}\textnormal{is a well-separating common complement of }(V_j)_{j\in\natural}\big\}
		\end{align*}
		has full Lebesgue measure in the probe space $C^k$ for every translation $(x_1,\dots,x_k)$ of  $X^k$. To get a notion of Lebesgue measure on $C^k$ we identify a basis $b_1,\dots,b_k$ of $C$ with the standard basis $e_1,\dots,e_k$ of $\real^k$. Let us denote this isomorphism by $I:C\to\real^k$. We naturally get an isomorphism $I^k:C^k\to\real^{k\times k}$ mapping elements of $C^k$ to matrices column by column. Thus, we need to check for the measure of all coefficient matrices yielding well-separating common complements. At this point, let us note that the norm on $X^k$ is given by $\|(x_1,\dots,x_k)\|_{X^k}:=\|x_1\|+\dots+\|x_k\|$.\par
		Fix a translation $(x_1,\dots,x_k)$. For each $j\in\natural$ we can write $x_i=c_{ij}'+v_{ij}'$ according to the splitting $X=C\oplus V_j$. The translation contributed by $(c_{1j}',\dots,c_{kj}')$ boils down to a translation on $\real^{k\times k}$ by $A_j:=I^k(c_{1j}',\dots,c_{kj}')$. We are interested in the extend of this translation with increasing $j$. To find an upper bound on the norm of $A_j$, we first assume that $\|c_{ij}'\|>0$. It holds
		\begin{equation*}
			\frac{\|x_i\|}{\|c_{ij}'\|}=\left\|\frac{c_{ij}'}{\|c_{ij}'\|}+\frac{v_{ij}'}{\|c_{ij}'\|}\right\|\geq\textnormal{d}\left(\frac{c_{ij}'}{\|c_{ij}'\|},V_j\right)\geq\delta_j.
		\end{equation*} 
		Thus, we always have $\|c_{ij}'\|\leq\delta_j^{-1}\max_i\|x_i\|$. Switching to the coefficient space, we get $\|A_j\|_2\leq\|I^k\|k\delta_j^{-1}\max_i\|x_i\|$, which can be estimated further by $\tilde{\delta}_j^{-1}:=\max\left(1,\|I^k\|k\delta_j^{-1}\max_i\|x_i\|\right)$.\par 
		Now, let $A$ be as in \cref{lemmaInverseUpperBoundGeneric}. We will show that $A$ induces a well-separating common complement. Let $(c_1,\dots,c_k):=(I^k)^{-1}A$ and let $c\in\textnormal{span}(c_1+x_1,\dots,c_k+x_k)$ with $\|c\|=1$. We can express $c$ in terms of coefficients
		\begin{equation}\label{equationPrevalenceEq1}
			c=\sum_{i=1}^{k}\gamma_i(c_i+x_i)=\sum_{i=1}^{k}\gamma_i\sum_{l=1}^{k}(\alpha_{li}+\alpha^j_{li})b_l+\sum_{i=1}^{k}\gamma_i v_{ij}',
		\end{equation}
		where $A=(\alpha_{il})_{il}$ and $A_j=(\alpha^j_{il})_{il}$. Since $A+A_j$ is invertible by \cref{lemmaInverseUpperBoundGeneric} and $b_1,\dots,b_k$ is a basis, the vectors $\sum_{l=1}^{k}(\alpha_{li}+\alpha^j_{li})b_l$ for $i=1,\dots,k$ form a basis of $C$. In particular, the double sum in \cref{equationPrevalenceEq1} does not vanish. Using the fact that $C$ is $\delta$-well-separating, we compute
		\begin{align*}
			\textnormal{d}(c,V_j)&=\textnormal{d}\left(\sum_{i,l}\gamma_i(\alpha_{li}+\alpha^j_{li})b_l,V_j\right)\\
			&=\Big\|\sum_{i,l}\gamma_i(\alpha_{li}+\alpha^j_{li})b_l\Big\|\,\textnormal{d}\left(\frac{\sum_{i,l}\gamma_i(\alpha_{li}+\alpha^j_{li})b_l}{\left\|\sum_{i,l}\gamma_i(\alpha_{li}+\alpha^j_{li})b_l\right\|},V_j\right)\\
			&\geq \Big\|\sum_{i,l}\gamma_i(\alpha_{li}+\alpha^j_{li})b_l\Big\|\delta_j.
		\end{align*}
		We transfer further norm estimates onto the coefficient space. It holds
		\begin{equation*}
			\Big\|\sum_{i,l}\gamma_i(\alpha_{li}+\alpha^j_{li})e_l\Big\|_2\leq \|I\|\Big\|\sum_{i,l}\gamma_i(\alpha_{li}+\alpha^j_{li})b_l\Big\|.
		\end{equation*}
		Let $\gamma:=(\gamma_1,\dots,\gamma_k)^T$. Using the identity $\gamma=(A+A_j)^{-1}(A+A_j)\gamma$, we get
		\begin{align*}
			\Big\|\sum_{i,l}\gamma_i(\alpha_{li}+\alpha^j_{li})e_l\Big\|_2 &= \|(A+A_j)\gamma\|_2\\
			&\geq \|(A+A_j)^{-1}\|_2^{-1}\|\gamma\|_2\\
			&\geq \epsilon j^{-2}\tilde{\delta}_j^{k-1}\|\gamma\|_2
		\end{align*}
		with $\epsilon>0$ from \cref{lemmaInverseUpperBoundGeneric}. As $(c_i+x_i)_{i=1}^k$ are linearly independent, the norm of $\gamma$ s.t. $\|\sum_{i=1}^{k}\gamma_i(c_i+x_i)\|=1$ for fixed $c_i$ and $x_i$ is bounded from below by a positive constant $\eta>0$.\par 
		Let $\delta_j':=\epsilon j^{-2}\tilde{\delta}_j^{k-1}\eta\|I\|^{-1}\delta_j$. Putting everything together, we have shown that
		\begin{equation*}
			\underset{c\in\textnormal{span}(c_1+x_1,\dots,c_k+x_k),\,\|c\|=1}{\inf}\,\textnormal{d}(c,V_j)\geq\delta_j'
		\end{equation*}
		for all $j\in\natural$, which tells us that $\textnormal{span}(c_1+x_1,\dots,c_k+x_k)$ is a $\delta'$-well-separating common complement of $(V_j)_{j\in\natural}$. Hence, given an arbitrary translation by $(x_1,\dots,x_k)\in X^k$, almost all $A\in\real^{k\times k}$ induce a well-separating common complement.
	\end{proof}

	\begin{remark}\label[remark]{remarkPrevalenceDegree}
		Tracking $\delta'$ in the Hilbert space setting reveals that almost every tuple yields a common complement such that the degree of transversality decays at most polynomially with $\epsilon j^{-(5k^2+2)}$ for some $\epsilon>0$. A better general rate of decay may be obtained by carefully refining the proofs (also see \cref{remarkExistenceHilbertSpaceDelta}).
	\end{remark}

	\bigskip
	
	\section*{Acknowledgments}
	This paper is a contribution to the project M1 (Instabilities across scales and statistical mechanics of multi-scale GFD systems) of the Collaborative Research Centre TRR 181 "Energy Transfer in Atmosphere and Ocean" funded by the Deutsche Forschungsgemeinschaft (DFG, German Research Foundation) - Projektnummer 274762653.

	\bigskip
	
	\bibliographystyle{siam}
\bibliography{sources}

\begin{thebibliography}{10}

\bibitem{Burger1996}
{\sc T.~Burger, P.~Gritzmann, and V.~Klee}, {\em Polytope projection and
  projection polytopes}, The American Mathematical Monthly, 103 (1996),
  pp.~742--755.

\bibitem{drivaliaris2008subspaces}
{\sc D.~Drivaliaris and N.~Yannakakis}, {\em Subspaces with a common complement
  in a banach space}, arXiv preprint arXiv:0805.4707,  (2008).

\bibitem{Drivaliaris2008}
\leavevmode\vrule height 2pt depth -1.6pt width 23pt, {\em Subspaces with a
  common complement in a separable hilbert space}, Integral Equations and
  Operator Theory, 62 (2008), pp.~159--167.

\bibitem{Gary2013SemiInvertibleMET}
{\sc G.~Froyland, S.~Lloyd, and A.~Quas}, {\em A semi-invertible oseledets
  theorem with applications to transfer operator cocycles}, Discrete and
  Continuous Dynamical Systems - A, 33 (2013), pp.~3835--3860.

\bibitem{gonzalez-tokman_quas_2014}
{\sc C.~González-Tokman and A.~Quas}, {\em A semi-invertible operator
  oseledets theorem}, Ergodic Theory and Dynamical Systems, 34 (2014),
  p.~1230–1272.

\bibitem{gonzalez2014concise}
\leavevmode\vrule height 2pt depth -1.6pt width 23pt, {\em A concise proof of
  the multiplicative ergodic theorem on banach spaces}, Journal of Modern
  Dynamics, 9 (2015), pp.~237 -- 255.

\bibitem{john2014}
{\sc F.~John}, {\em Extremum problems with inequalities as subsidiary
  conditions}, in Traces and emergence of nonlinear programming, Springer,
  2014, pp.~197--215.

\bibitem{Lauzon2004}
{\sc M.~Lauzon and S.~Treil}, {\em Common complements of two subspaces of a
  hilbert space}, Journal of Functional Analysis, 212 (2004), pp.~500 -- 512.

\bibitem{noethen2019METHilbert}
{\sc F.~Noethen}, {\em Computing covariant lyapunov vectors in hilbert spaces},
  to appear.

\bibitem{ott2005prevalence}
{\sc W.~Ott and J.~Yorke}, {\em Prevalence}, Bulletin of the American
  Mathematical Society, 42 (2005), pp.~263--290.

\bibitem{Todd1990}
{\sc A.~R. Todd}, {\em Covers by linear subspaces}, Mathematics Magazine, 63
  (1990), pp.~339--342.

\end{thebibliography}
	
\end{document}